\newtheorem{theorem}{Theorem}[section]
\newtheorem{proposition}[theorem]{Proposition}
\newtheorem{lemma}[theorem]{Lemma}
\newtheorem{definition}[theorem]{Definition}
\newtheorem{corollary}[theorem]{Corollary}
\newtheorem*{theoremA}{Theorem A}
\newtheorem*{theoremB}{Theorem B}
\newtheorem*{definitions}{Definition}
\newtheorem*{remark}{Remark}
\newcommand{\Dim}{\mathrm{dim}}
\newcommand{\DimT}{\underline{\mathrm{dim}}^{\mathrm{T}}}
\newcommand{\Mrad}{{\mathcal{M}}_{\mathrm{r}}^{\mathrm{1}}}
\newcommand{\R}{\mathbf{R}}
\newcommand{\diminf}{\underline{\dim}}
\newcommand{\BMS}{m_{\mathrm{BMS}}}
\newcommand{\Hr}{\mathbf{H}}
\author{Laurent Dufloux}
\title{Projections of Patterson-Sullivan Measures and the Dichotomy of Mohammadi-Oh}
\begin{document}
\maketitle

\abstract{Let $\Gamma$ be some discrete subgroup of $\mathbf{SO}^o(n+1,\R)$ with finite Bowen-Margulis-Sullivan measure. We study the dynamics of the Bowen-Margulis-Sullivan measure measure with respect to closed connected subspaces of the $N$ component in some Iwasawa decomposition $\mathbf{SO}^o(n+1,\R)=KAN$. We also study the dimension of projected Patterson-Sullivan measures along some fixed direction.}

\section{Introduction}
\subsection{Statement of results}
Fix some integer $n \geq 2$ and let $G$ be the group of direct
isometries of the real hyperbolic $(n+1)$-space $\Hr^{n+1}$,
$G=\mathbf{SO}^o(1,n+1)$, and choose some Iwasawa decomposition
$G=KAN$. Recall that $N$ identifies with the real $n$-space $\R^n$. We
will consider the Bowen-Margulis-Sullivan (BMS) measure on $\Gamma
\backslash G$. Our first result in this paper is the following
\begin{theoremA}
  Let $\Gamma$ be a discrete non-elementary subgroup of $G$ of growth
  exponent $\delta_\Gamma$, and assume that $\Gamma$ is Zariski-dense
  and has finite BMS measure. Let $m$ be some integer, $1 \leq m \leq
  n$, and fix some $m$-plane $U$ in $N$. Let $U$ act on the right on
  $\Gamma \backslash G$. The following dichotomy holds:
  \begin{itemize}
  \item if $\delta_\Gamma \leq n-m$, the BMS measure is totally
    dissipative (and thus not ergodic) with respect to $U$ ;
  \item if $\delta_\Gamma > n-m$, the BMS measure is totally recurrent
    with respect to $U$.
  \end{itemize}
\end{theoremA}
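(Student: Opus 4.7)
My strategy is to transfer the $U$-recurrence/dissipation dichotomy for the BMS measure into an orbital-counting problem for $\Gamma$ in thin tubes around $m$-dimensional subspheres of $\partial \Hr^{n+1}$, and to resolve that problem via the shadow lemma and a projection theorem for the Patterson--Sullivan (PS) measure $\mu_o$.

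\emph{Reduction to orbital counting.} After lifting to $G$ and identifying the horospherical leaf through a BMS-typical point with $\R^n$ by a stereographic projection from the fixed boundary point of $N$, the $U$-orbit becomes an $m$-dimensional affine subspace whose trace on the boundary sphere is an $m$-dimensional subsphere $S_U \subset \partial \Hr^{n+1}$. Unfolding the $\Gamma$-quotient, recurrence of $\Gamma g$ under $U$ amounts to the assertion that infinitely many $\Gamma$-translates of a reference compact neighborhood meet $gU$, and this is controlled by the number of $\gamma \in \Gamma$ whose shadow on $\partial \Hr^{n+1}$ falls near $S_U$.

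\emph{Counting via shadow and projection.} Sullivan's shadow lemma gives $\mu_o(\mathrm{shadow}(\gamma o)) \asymp e^{-\delta_\Gamma d(o, \gamma o)}$; summing, the number of $\gamma \in \Gamma$ with $d(o, \gamma o) \leq T$ whose shadow lies within an $e^{-T}$-tube around $S_U$ is comparable to $e^{\delta_\Gamma T} \mu_o(\mathrm{tube}_{e^{-T}}(S_U))$. A Marstrand-type projection theorem for $\mu_o$ would yield $\mu_o(\mathrm{tube}_\epsilon(S_U)) \asymp \epsilon^{\min(\delta_\Gamma,\, n-m)}$, so the count behaves like $e^{(\delta_\Gamma - \min(\delta_\Gamma, n-m))T}$. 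This is bounded exactly when $\delta_\Gamma \leq n-m$, in which case the $U$-orbit eventually escapes every compact set in finite Haar time (dissipation), and grows exponentially when $\delta_\Gamma > n-m$, in which case the returns are Poincar\'e-recurrent (recurrence).

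\emph{Main obstacle.} The decisive step is the projection estimate $\mu_o(\mathrm{tube}_\epsilon(S_U)) \asymp \epsilon^{\min(\delta_\Gamma,n-m)}$ \emph{uniformly} in $S_U$, not merely for almost every $S_U$ as in the classical Marstrand theorem. The Zariski density of $\Gamma$ should preclude exceptional directions for $\mu_o$, but turning this algebraic input into the required quantitative uniform estimate will likely demand ergodic-theoretic ingredients, such as mixing of the $A$-action on $\Gamma \backslash G$ (available here under our hypotheses) or a zero-one law for the tangent measures of $\mu_o$. The borderline case $\delta_\Gamma = n-m$, where the counting bound degenerates to polynomial or logarithmic growth, will probably need a finer analysis to ensure dissipation rather than recurrence.
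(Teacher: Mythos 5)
Your route (unfolding to an orbital count in shrinking tubes around the subsphere $S_U$ and estimating via the shadow lemma) is essentially the geometric strategy of Maucourant--Schapira, not the one the paper uses, and as written it has three genuine gaps. First, and most seriously, the step you yourself flag as the ``main obstacle'' is the whole difficulty: Marstrand only gives the projection estimate for \emph{almost every} $m$-plane, and you offer no mechanism for upgrading it to the fixed plane $U$. The paper's resolution is not a uniform tube estimate at all; it is the observation that the right-$M$-invariance of $\BMS$ (with $M$ acting on $N\simeq\R^n$ as $\mathbf{SO}(n)$) forces the transverse dimension $\DimT(\BMS,N/U)$ to be \emph{the same for all} $m$-planes $U$, so Marstrand's almost-everywhere conclusion plus Fubini pins down its value, $\inf\{n-m,\delta_\Gamma\}$, for \emph{every} $U$. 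Without some such symmetry input, ``Zariski density should preclude exceptional directions'' is a hope, not an argument.

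Second, even granting a projection/dimension statement for the fixed $U$, dimension only yields $\mu_o(\mathrm{tube}_\epsilon(S_U))=\epsilon^{\min(\delta_\Gamma,n-m)+o(1)}$, so your count is $e^{o(T)}$ precisely at the critical exponent $\delta_\Gamma=n-m$, and convergence versus divergence cannot be decided; this is exactly the case left open by Maucourant--Schapira and the case Theorem A settles. The paper sidesteps all rate estimates by combining the exact Ledrappier--Young additivity $\delta_\Gamma=\Dim(\BMS,U)+\DimT(\BMS,N/U)$ with the bound $\DimT\leq n-m$: when $\delta_\Gamma\leq n-m$ one gets $\Dim(\BMS,U)=0$ exactly, and when $\delta_\Gamma>n-m$ one gets $\Dim(\BMS,U)=\delta_\Gamma-(n-m)>0$. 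Third, your passage from counting to dynamics is not justified: divergence of the expected number of returns does not give recurrence without a second-moment (quasi-independence) argument, and since $\BMS$ is not $U$-quasi-invariant there is no Poincar\'e recurrence theorem to invoke. The paper replaces this by the dichotomy of Lemma \ref{lemma.dissip}: the conditional measure of $\BMS$ along $U$ is almost surely finite (equivalently $\Dim(\BMS,U)=0$, giving total dissipativity) or almost surely infinite (equivalently $\Dim(\BMS,U)>0$, giving total recurrence). You would need to supply substitutes for all three ingredients --- the $M$-invariance symmetry, the exact additivity, and the conditional-measure dichotomy --- to make your outline into a proof.
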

Unless $\Gamma$ is a lattice, the BMS measure is
not $N$-invariant (or even quasi-invariant), so the reader may wonder
what it means for a non-invariant measure to be totally recurrent or
dissipative -- see section \ref{ss.recurrence}.

Theorem A above is a particular case of the more precise Theorem
\ref{th.principal1}, which I do not state in this introduction because
it is slightly more technical, as it involves the theory of
conditional measures along group operations and the dimension theory
of such conditional measures. Theorem A is a \emph{qualitative}
statement which is weaker than the \emph{quantitative} Theorem
\ref{th.principal1}, the latter dealing with \emph{dimension} of BMS
measure along the subgroup $U$ (as well as the transversal dimension
with respect to this subgroup).

Before stating our second result, let us introduce the (\emph{ad hoc})
notion of \emph{regular measure} on the Euclidean space.
\begin{definitions}
  Let $\mu$ be some (Borel) probability measure on $\R^n$ ($n \geq
  2$). Assume that $\mu$ has exact dimension $\delta$. We say that
  $\mu$ is \emph{regular} if for any $m$-plane $V$ in $\R^n$ ($1 \leq
  m \leq n-1$) the orthogonal projection of $\mu$ onto $V$ has
  dimension $\inf\{\delta,m\}$ almost everywhere.
\end{definitions}
Obviously, if this is the case, then the orthogonal projection of
$\mu$ onto $V$ is in fact exact dimensional.

\begin{theoremB}[Theorem \ref{th.principal2}]
  Let $\Gamma$ be a discrete non-elementary subgroup of
  $G=\mathbf{SO}^o(1,n+1)$. Assume that $\Gamma$ is Zariski-dense and
  has finite BMS measure. Let $\mu$ be the
  Patterson-Sullivan measure (of exponent $\delta_\Gamma$) associated
  with $\Gamma$. For $\mu$-almost every $\xi \in \partial \Hr^{n+1}$,
  the push-forward of $\mu$ through the inverse stereographic mapping
  $\partial \Hr^{n+1} \setminus \{\xi\} \to \R^n$ is a regular
  measure.
\end{theoremB}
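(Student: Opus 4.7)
The plan is to reinterpret each orthogonal projection of the pushforward $\nu_\xi := (\pi_\xi)_\ast \mu$ as the \emph{transversal} dimension of the BMS conditional measures along a suitable subgroup of $N$; this reduces Theorem B to the dimension-theoretic statement of Theorem \ref{th.principal1}.

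First, I would fix a $\mu$-generic $\xi \in \partial \Hr^{n+1}$ and use it as the repelling fixed point of some $A$-orbit in the chosen Iwasawa decomposition $G = KAN$, so that $N$ acts by translations on $\partial \Hr^{n+1} \setminus \{\xi\} \cong \R^n$ via the inverse stereographic projection $\pi_\xi$. In Hopf coordinates, the conditional measures of $\BMS$ along $N$-orbits agree, up to a smooth density bounded on compact pieces, with $\nu_\xi = (\pi_\xi)_\ast \mu$. Since $\pi_\xi$ is locally bi-Lipschitz away from $\xi$ and $\mu$ is exact dimensional of dimension $\delta_\Gamma$ (a standard consequence of ergodicity of the BMS flow and the shadow lemma), so is $\nu_\xi$.

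Next, given an $m$-plane $V \subset \R^n$, set $U := V^\perp$, a closed connected subgroup of $N$ of dimension $n - m$. Applying Theorem \ref{th.principal1} to the right action of $U$ on $\Gamma \backslash G$, the BMS conditional dimension along $U$-orbits equals $\max\{0, \delta_\Gamma - m\}$ and the transversal dimension equals
\[
  \delta_\Gamma - \max\{0, \delta_\Gamma - m\} \;=\; \min\{\delta_\Gamma, m\}.
\]
Reading this back through the conditional-measure identification of the previous step, for $\mu$-almost every $\xi$ the orthogonal projection of $\nu_\xi$ onto $V$ is exact dimensional of dimension $\min\{\delta_\Gamma, m\}$.

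The main obstacle is to upgrade this \emph{per-$V$} statement to a \emph{simultaneous} one, since regularity as defined requires a single $\mu$-null exceptional set of basepoints $\xi$ valid for every $m$-plane $V$. I would first intersect the $\mu$-null exceptional sets of Theorem \ref{th.principal1} over a countable dense family of $m$-planes in the Grassmannian, and then extend to arbitrary $V$ using the trivial upper bound $\dim (\mathrm{proj}_V)_\ast \nu_\xi \leq \min\{\delta_\Gamma, m\}$ coming from exact dimensionality of $\nu_\xi$, together with lower semi-continuity of local dimensions of projected measures, aided by the homogeneity provided by the $N$-translation action in stereographic coordinates. Establishing this uniformity over compact pieces of the Grassmannian is the delicate point of the argument.
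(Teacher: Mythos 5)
Your reduction of the fixed-$V$ statement to Theorem \ref{th.principal1} is sound as far as it goes: identifying the conditional measures $\sigma(x)$ along $N$ with the Patterson--Sullivan measure in stereographic coordinates (Lemma \ref{lemma.biglemma}, items 1 and 2), taking $U=V^\perp$, and reading off $\DimT(\BMS,N/U)=\inf\{\delta_\Gamma,m\}$ from the Ledrappier--Young formula does give, \emph{for each fixed} $m$-plane $V$, a conull set of base points $\xi$ for which the projection onto $V$ has the right dimension. But this is exactly where the paper stops using Theorem \ref{th.principal1} and warns about the order of quantifiers: that theorem produces a $V$-dependent null set, and the entire content of Theorem B beyond Theorem \ref{th.principal1} is the interchange of quantifiers, from ``for every $V$, almost every $\xi$'' to ``for almost every $\xi$, every $V$''.

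Your proposed fix for this interchange does not work. The map $V \mapsto \diminf(\pi_V \nu)$ is not lower semicontinuous (nor continuous) for a general exact-dimensional measure $\nu$: Marstrand's theorem (Proposition \ref{prop.marstrand}) only excludes a null set of exceptional directions, and that exceptional set can be dense in the Grassmannian, with the dimension of the projection genuinely dropping there (already a product of two Cantor measures in the plane exhibits this along the coordinate axes). So intersecting the good sets over a countable dense family of planes and invoking ``semicontinuity'' or ``homogeneity under $N$-translations'' cannot control an arbitrary direction; translating by $N$ does not move the direction $V$ at all, and the trivial upper bound $\leq \inf\{\delta_\Gamma,m\}$ is the easy half. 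The missing ingredient is precisely the Furstenberg--Hochman--Shmerkin machinery the paper deploys: one shows that the normalized conditionals $\sigma^*(x)$ form an ergodic fractal distribution $P$ (quasi-Palm via the equivariance \eqref{eq.equivariance}, $S_t^*$-invariant and ergodic via mixing of the frame flow, Theorem \ref{th.dale}), that almost every $\sigma^\square(x)$ is uniformly scaling and generates $P^\square$ (Birkhoff), and that $M$-invariance of $P$ forces $E_{P^\square}(\pi)=\inf\{\delta_\Gamma,k\}$ (Corollary \ref{corollary.lol}); Hochman's Theorem \ref{th.hochman} then yields the lower bound $\diminf(\pi_V \sigma^\square(x)) \geq E_{P^\square}(\pi_V)$ simultaneously for \emph{every} $V$, with a single null set of $x$. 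Without this (or an equivalent tool ruling out exceptional directions), your argument establishes only the almost-every-direction version of Theorem B, which is weaker than the claim.
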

The proof of Theorem B relies on results of Hillel
Furstenberg, Pablo Shmerkin and Michael Hochman.

\subsection{Background and motivation}
Let us now provide some background. Theorem A is motivated by the works of
Mohammadi-Oh and Maucourant-Schapira. The seminal paper is \cite{OhMohammadi}. In this work, Mohammadi and Oh look at the dynamics of the Burger-Roblin measure (BR, see \cite{OhMohammadi} for the definition) on $\Gamma \backslash G$ with respect to a fixed $m$-plane $U$ in $N$. They state and prove the following
\begin{theorem}[\cite{OhMohammadi}, Theorem 1.1]
Let $n=2$ (so we work in $\Hr^3$). Assume that $\Gamma$ is Zariski-dense and convex-cocompact. If $\delta_\Gamma > 1$, then the Burger-Roblin measure is totally recurrent and ergodic with respect to any 1-parameter subgroup $U$ of $N$.
\end{theorem}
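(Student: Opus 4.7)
The plan is to split the statement into recurrence and ergodicity, which require different tools. Recurrence will be reduced to Theorem A of the present paper; ergodicity will follow from a Hopf-type argument combined with mixing of the geodesic flow on the BMS measure.

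For recurrence, compare the dynamics of BR and of BMS under $U$. Both measures share the same Patterson-Sullivan factor transverse to $N$-orbits, differing only in the conditionals along $N$-orbits (Lebesgue for BR, a singular measure supported on the visual image of the limit set for BMS). Since $n = 2$ and $U$ is one-dimensional, choose a complementary one-parameter subgroup $V \subset N$, so $N = U \oplus V$. Fixing a flow-box cross-section to the $U$-action with $V$-transverse fiber, the first-return maps for BR and for BMS coincide up to a locally bounded Radon-Nikodym derivative. Theorem A applied with $n = 2$, $m = 1$, and $\delta_\Gamma > n - m = 1$ gives total recurrence of BMS under $U$, which transfers to total recurrence of BR.

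For ergodicity, let $f$ be a bounded $U$-invariant function on $(\Gamma \backslash G, \mathrm{BR})$, and aim to prove that $f$ is $N$-invariant; then $N$-ergodicity of BR (Roblin, Winter, Mohammadi-Oh, valid for Zariski-dense convex-cocompact $\Gamma$) will force $f$ to be constant. Choose the complement $V$ so that the geodesic flow $a_t$ contracts $U$ and dilates $V$ (after possibly conjugating by an element of $M$). Using mixing of $a_t$ on the finite BMS measure (Babillot, for Zariski-dense $\Gamma$) together with a Hopf chain $x$, $a_{-t} x$, $a_{-t} v x$, $vx$ and the conformal cocycle relating BMS to BR, one propagates the $U$-invariance of $f$ first to $V$-invariance and then to $N$-invariance.

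The main obstacle will be executing the Hopf step cleanly. The difficulty is that BR is infinite and not $A$-invariant, so the mixing argument produces boundary terms involving the Busemann cocycle $e^{(\delta_\Gamma - n) t}$; one needs quantitative mixing and uniform estimates on flow-box geometry to control them. The convex-cocompactness hypothesis enters decisively here, providing uniform mixing rates and excluding cusps where the geometry degenerates, while Zariski-density is used to rule out non-trivial $U$-invariant sets coming from proper algebraic subgroups. I expect this quantitative Hopf propagation from $U$-invariance to $V$-invariance to be the chief technical hurdle.
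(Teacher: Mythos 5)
This statement is not proved in the paper at all: it is Theorem 1.1 of Mohammadi--Oh, quoted as background, and the author explicitly defers to \cite{OhMohammadi}, describing only the strategy there (Marstrand's projection theorem gives recurrence of the BMS measure for $\delta_\Gamma>1$, this transfers to the BR measure, and ergodicity is then obtained by a ``difficult and lengthy'' separate argument). Your recurrence half is consistent with that strategy, although your justification for the transfer from BMS to BR --- that the two first-return maps ``coincide up to a locally bounded Radon--Nikodym derivative'' --- is not right as stated, since BMS and BR are mutually singular unless $\Gamma$ is a lattice (they have singular conditionals along $N$-orbits); the transfer is true but needs the leafwise comparison of the transverse structures, not an equivalence of measures.

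The ergodicity half contains a fatal error. You propose to choose a complementary line $V\subset N$ ``so that the geodesic flow $a_t$ contracts $U$ and dilates $V$ (after possibly conjugating by an element of $M$).'' This is impossible: as the paper itself records, $A$ acts on $N$ by \emph{homotheties}, i.e.\ $\mathrm{Int}(a_t)$ is the scalar map $y\mapsto e^t y$ on $N\simeq\R^2$, so every direction of $N$ is expanded or contracted by the same factor, and no conjugation by $M\simeq\mathbf{SO}(2)$ changes the (single) Lyapunov exponent. The directions contracted by $a_t$ lie in the opposite horospherical group $N^-$, not in $N$, so the Hopf chain $x,\ a_{-t}x,\ a_{-t}vx,\ vx$ with $v\in V\subset N$ cannot be set up; a Hopf argument of this shape can at best prove $N$-ergodicity from $N^-$- and $A$-invariance, and gives no mechanism for promoting $U$-invariance to invariance under the complementary direction $V$ \emph{inside} $N$. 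That promotion is precisely the hard content of Mohammadi--Oh's proof, and it is obtained by a completely different route: recurrence of the BMS measure under $U$ combined with polynomial divergence of nearby $U$-orbits (their ``window theorem'') produces additional invariance in the spirit of Ratner/Margulis--Tomanov. None of that machinery appears in your sketch, so the proposal does not close the ergodicity statement.
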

One of the features of their approach is the use of Marstrand's projection Theorem to show that the BMS measure is recurrent with respect to $U$ (when $\delta_\Gamma > 1$), which  implies that the BR measure is also recurrent with respect to $U$. The authors are then able to deduce that the BR measure is ergodic with respect to $U$; the proof is difficult and lengthy and we refer the reader to \cite{OhMohammadi}.

This is what prompted me to try and analyse precisely the BMS measure from the geometric and dynamical point of view, in order to deduce corresponding statements for the Burger-Roblin measure. In particular, the case $\delta \leq 1$ is not, in my opinion, clearly settled in \cite{OhMohammadi}.

Maucourant and Schapira have also looked at these questions. They prove the following result.
\begin{theorem}[\cite{MaucourantSchapira}]
  Let $n$ be $\geq 2$. Assume that $\Gamma$ is Zariski-dense and has finite BMS measure. Let $U$ be some $m$-plane of $N$, $1 \leq m \leq n$.
  \begin{enumerate}
  \item If $\delta_\Gamma < n-m$ and, furthermore, $\Gamma$ is convex-cocompact, then the BMS and BR measures are totally dissipative.
    \item If $\delta_\Gamma > n-m$, the BMS and BR measures are ergodic and recurrent.
  \end{enumerate}
\end{theorem}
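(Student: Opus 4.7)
The plan is to work in Hopf coordinates on $\Gamma \backslash G$, where the BMS measure locally takes the product form $d\BMS = e^{-\delta_\Gamma(\beta_\xi+\beta_\eta)} d\mu(\xi)\, d\mu(\eta)\, dt$ on $\partial^2\Hr^{n+1}\times\R$, and where the $N$-action on $G/M$ moves the forward endpoint along $\partial\Hr^{n+1}\setminus\{\eta\}\simeq \R^n$ via the stereographic chart based at the backward endpoint $\eta$. Fix an orthogonal decomposition $N=U\oplus U^\perp$. In each flow box the $U$-orbits foliate the forward-endpoint factor by parallel $m$-planes, and the transversal system for the $U$-action is, up to a bounded Radon-Nikodym density, the orthogonal projection $\pi_{U^\perp}\mu$ onto $\R^{n-m}$. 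The whole theorem is thus reduced to a statement about the dimension of $\pi_{U^\perp}\mu$.

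For the recurrent case $\delta_\Gamma>n-m$, I would invoke Marstrand's projection theorem: since $\mu$ has dimension $\delta_\Gamma>n-m$, for a.e.\ direction $\pi_{U^\perp}\mu$ is absolutely continuous with respect to Lebesgue on $\R^{n-m}$. Using Zariski density and the $\Gamma$-equivariance of $\mu$, this can be upgraded to hold for a set of base points of full BMS measure. In a flow box the BMS measure is then equivalent to the product of infinite Lebesgue measure along $U$-leaves with a finite transversal measure, so the finiteness of $\BMS$ on $\Gamma\backslash G$ forces typical $U$-orbits to return infinitely often to any compact set of positive measure; this is the recurrence. For ergodicity I would use a Hopf ratio argument: the full $N$-action is ergodic by the standard proof via the geodesic flow, so any $U$-invariant set is, up to $\BMS$-null sets, saturated by $U^\perp$-translates, and the non-invariance of $\BMS$ under $U^\perp$ can be absorbed through the transversal density using the absolute continuity just obtained. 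The Burger-Roblin statement follows by replacing one Patterson-Sullivan factor by the conformal Lebesgue measure $\nu_o$, which preserves the recurrence/ergodicity qualities of $U$ since the two transversal densities are mutually locally bounded on the support.

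For the dissipative case $\delta_\Gamma<n-m$ with $\Gamma$ convex-cocompact, the reasoning is dual. The trivial dimension bound gives $\Dim(\pi_{U^\perp}\mu)\le \delta_\Gamma<n-m$, so $\pi_{U^\perp}\mu$ is singular and lives on a set of $(n-m)$-dimensional Lebesgue measure zero. Convex-cocompactness makes the limit set compact and the $\BMS$-support a compact subset of $\Gamma\backslash G$. A return of a $U$-orbit in the flow box corresponds to two $\Gamma$-translates of a boundary point lying in the same $U$-fibre of the local chart; the smallness of $\pi_{U^\perp}\mu$ combined with the discrete-orbit geometry permits a Borel-Cantelli-type covering argument to show that the set of base points whose $U$-orbit returns infinitely often to the $\BMS$-support is negligible, which is total dissipativity. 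The BR case follows as before by the disintegration formula.

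The main obstacle is the ergodicity half of the recurrent case. Recurrence comes essentially for free from Marstrand once the flow-box picture is in place, but ergodicity requires turning a generic-direction projection statement into a direction-free conclusion, and this is where one must exploit the interaction between the $U$-action, the transversal $U^\perp$, and the geodesic flow in a way that accommodates the non-$N$-invariance of $\BMS$; a quantitative or $\Gamma$-averaged version of Marstrand, or a Hopf-type symmetrisation adapted to the conformal cocycle $\beta$, seems indispensable here and is the step I would expect to occupy most of the work.
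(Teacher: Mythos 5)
The paper does not actually prove this statement: it is quoted verbatim from \cite{MaucourantSchapira} as background, and the paper's own contribution (Theorem \ref{th.principal1} together with Lemma \ref{lemma.dissip}) is a sharpening of it. Judged on its own terms, your proposal contains the right ingredients (Hopf coordinates, Marstrand, a transversal/leafwise decomposition relative to $U$) but has two genuine gaps. The first is the passage from Marstrand's theorem to the \emph{fixed} plane $U$. Proposition \ref{prop.marstrand} only controls $\pi_{U^\perp}\mu$ for \emph{almost every} $(n-m)$-plane, while the theorem asserts recurrence for \emph{every} $m$-plane; ``Zariski density and the $\Gamma$-equivariance of $\mu$'' is not the mechanism that closes this gap. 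What actually works (and is what this paper does in the proof of Theorem \ref{th.principal1}) is the $M$-invariance of the lifted BMS measure on $\Gamma\backslash G$: since $M\simeq\mathbf{SO}(n)$ acts transitively on $m$-planes of $N$ by conjugation, the quantities $\Dim(\BMS,U)$ and $\DimT(\BMS,N/U)$ are independent of $U$, and one Fubini argument over the rotation-invariant measure on the Grassmannian converts the a.e.-direction statement into an every-direction one. Relatedly, your recurrence criterion (``finiteness of $\BMS$ forces returns'') needs to be the precise dichotomy that the conditional measure $\sigma_U(x)$ is a.s.\ infinite, which requires the Ledrappier--Young identity $\delta_\Gamma=\Dim(\BMS,U)+\DimT(\BMS,N/U)$ (or an equivalent) to conclude $\Dim(\BMS,U)=\delta_\Gamma-(n-m)>0$; the flow-box heuristic alone does not deliver this.

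The second gap is that the ergodicity half of case 2 is missing, as you yourself acknowledge; this is not a routine completion, since $\BMS$ is not quasi-invariant under $U$ or $U^\perp$ and a naive Hopf ratio argument does not run --- the paper points out that the corresponding argument in \cite{OhMohammadi} is ``difficult and lengthy''. In the dissipative case, the step from ``$\pi_{U^\perp}\mu$ is singular'' to ``totally dissipative'' is a non sequitur as written: singularity of the transversal projection says nothing by itself about the conditional measures along $U$ being finite; one needs $\Dim(\BMS,U)=0$ together with the equivalences of Lemma \ref{lemma.dissip}, or the shadow-counting argument of \cite{MaucourantSchapira} that genuinely uses convex-cocompactness. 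Finally, your claim that ``the BR case follows as before'' in the dissipative direction is contradicted by the paper's own remark that dissipativity of the BMS measure need \emph{not} imply dissipativity of the BR measure (indeed the BR measure is recurrent at the critical exponent $\delta_\Gamma=n-m$); the BR statements require a separate argument.
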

Their approach is geometric and it also relies on Marstrand's projection Theorem. Note that the case $\delta_\Gamma = n-m$ is left open.

Theorem A thus clarifies the situation for the BMS measure. Note that we do not deal here with the BR measure; in fact, this will be done in a subsequent paper. It should be noted that, although recurrence of the BMS measure clearly implies recurrence of the BR measure, there is no reason why dissipativity of the BMS measure should imply dissipativity of the BR measure; indeed, it will be seen that the BR measure is in fact recurrent with respect to any $m$-plane when $\delta_\Gamma = n-m$, at least if $\Gamma$ is convex-cocompact. 

Another motivation for Theorem A (more precisely, for Theorem \ref{th.principal1}) comes from the complex hyperbolic world. We refer to \cite{preprint} for more details on this. To put it shortly, most of the Patterson-Sullivan theory we recall below holds, \emph{mutatis mutandis}, when the real hyperbolic space is replaced with the complex hyperbolic space; in this setting, $N$ identifies not with the Euclidean space but with the Heisenberg space, and the dimension of the BMS measure along the center of $N$ is related to the dimension of the limit set of $\Gamma$ \emph{with respect to the spherical metric on the boundary}. See \cite{preprint}, Theorem 37. The reason why the proof of Theorem \ref{th.principal1} does not easily translate into the complex hyperbolic setting is because of the lack of a useful version of Marstrand's projection Theorem in Heisenberg space.

As for Theorem B, the question it answers has apparently never been considered before. In general, understanding the geometry (and, in particular, the dimension) of the projection of a given ``fractal'' set or measure along some \emph{fixed} direction is a difficult problem . In recent years, a powerful theory has been set up by Furstenberg, Hochman, Shmerkin and other people. See \cite{Hoch} and other references there. We will apply this theory to prove Theorem B. 

In all this paper we fix an Iwasawa decomposition $G=KAN$. Recall that
$K$ is isomorphic to $\mathbf{SO}(n+1)$, $A$ is isomorphic to $\R$
(since $\Hr^{n+1}$ is a rank one symmetric space) and $N$ is
isomorphic to $\R^{n}$.

Recall that $A$ and $M$ centralize each other and that the group $AM$
normalizes the ``horospherical group'' $N$; we may thus look at the
way $AM$ operates on $N$, \emph{i.e.} for any $g \in AM$ consider the
group automorphism $h \mapsto ghg^{-1}$ from $N$ to $N$. In fact, $N$
may be identified with $\R^n$ and $M$ then identifies with the group
of all linear isometries of $\R^n$ (endowed with the usual euclidean
norm). Furthermore, $A$ identifies with the group of all linear
homotheties of $N$. These elementary facts are to be borne in mind as
we will use them throughout this paper.

If $G$ is some group and $g$ any element of $G$, the left and right
translations by $g$ and $g^{-1}$ respectively are denoted by
\[ L_g : h \mapsto gh\quad ; \quad R_g : h \mapsto h g^{-1} \text. \]

The plan of the paper is as follows. In section 2 we recall the
classical Marstrand Theorem as well as some useful facts from the
theory of self-similar measures. In section 3 we state basic facts
from the Patterson-Sullivan theory, and we apply the celebrated
Ledrappier-Young Theorem to the disintegration of the
BMS measure along subgroups of $N$. In section 4
and 5 we state and prove our main results.

I would like to thank my thesis advisor, Jean-Fran\c{c}ois Quint, for his constant support and help during my PhD.

\section{Preliminaries}
\subsection{Definition}
\begin{definition}
Let $X$ be a metric space and $\mu$ a  measure on $X$ such that any ball in $X$ has finite measure. The \emph{lower dimension} of $\mu$ at some point $x \in X$ is the finite or infinite number
\[ \diminf(\mu,x) = \liminf_{\rho \to 0} \frac{\log \mu(B(x,\rho))}{\log \rho}. \]
The \emph{lower dimension} of $\mu$ is the $\mu$-essential infimum of $\diminf (\mu,x)$, \emph{i.e.}
\[ \diminf(\mu) = \sup\{ s \geq 0\ ;\ \diminf(\mu,x) \geq s \text{ for $\mu$-almost every $x$} \}. \]
We say that $\mu$ is \emph{exact-dimensional of dimension $\delta$} if, for $\mu$-almost every $x$
\[ \lim_{\rho \to 0} \frac{\log \mu(B(x,\rho))}{\log \rho} = \delta \text. \]
\end{definition}

Let us state for future reference the following obvious
\begin{lemma}
Let $X,Y$ be two metric spaces and $\pi : X \to Y$ a Lipschitz mapping. If $\mu$ is a probability measure on $X$, then 
\[ \diminf(\mu,x) \geq \diminf(\pi \mu,\pi x) \]
for any $x \in X$, and
\[ \diminf(\mu) \geq \diminf(\pi \mu) \text. \]
\end{lemma}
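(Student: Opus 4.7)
The plan is to chase the Lipschitz constant through the definition of the lower dimension. Let $L$ be a Lipschitz constant for $\pi$, so that $\pi(B(x,\rho))\subseteq B(\pi x, L\rho)$ for every $x$ and $\rho>0$. Taking preimages gives $B(x,\rho)\subseteq \pi^{-1}(B(\pi x, L\rho))$, hence
\[
\mu(B(x,\rho)) \leq \pi\mu(B(\pi x, L\rho)).
\]
For $\rho$ small enough, $\log\rho<0$, so dividing the logarithm of the previous inequality by $\log\rho$ reverses the inequality:
\[
\frac{\log \mu(B(x,\rho))}{\log \rho} \geq \frac{\log \pi\mu(B(\pi x, L\rho))}{\log \rho}.
\]

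Now I would rewrite the right-hand side as
\[
\frac{\log \pi\mu(B(\pi x, L\rho))}{\log(L\rho)} \cdot \frac{\log(L\rho)}{\log \rho},
\]
noting that $\log(L\rho)/\log\rho = 1 + (\log L)/\log\rho \to 1$ as $\rho\to 0$. For sufficiently small $\rho$, both $\log(L\rho)$ and $\log\rho$ are negative and the first factor is nonnegative (it is what computes the local dimension of $\pi\mu$ at $\pi x$), so taking $\liminf_{\rho\to 0}$ gives
\[
\diminf(\mu,x) \;\geq\; \liminf_{\rho \to 0}\frac{\log \pi\mu(B(\pi x, L\rho))}{\log(L\rho)} \;=\; \diminf(\pi\mu,\pi x),
\]
which is the first assertion. (If $\diminf(\pi\mu,\pi x)=+\infty$ the inequality must be interpreted in the obvious way; the same chain of inequalities still applies.)

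For the second inequality, set $s=\diminf(\pi\mu)$. By definition, the set $A=\{y\in Y : \diminf(\pi\mu,y)\geq s\}$ has full $\pi\mu$-measure, so $\pi^{-1}(A)$ has full $\mu$-measure. The pointwise inequality just proved shows that $\diminf(\mu,x)\geq s$ for every $x\in \pi^{-1}(A)$, whence $\diminf(\mu)\geq s = \diminf(\pi\mu)$. The only subtlety in the whole argument is the bookkeeping with the Lipschitz constant in step two, which is handled by the elementary observation that $\log(L\rho)/\log\rho\to 1$; everything else is essentially formal, which is presumably why the authors call the lemma obvious.
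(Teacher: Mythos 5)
Your proof is correct and is exactly the standard argument the paper has in mind when it labels the lemma ``obvious'' and omits the proof: the inclusion $B(x,\rho)\subseteq\pi^{-1}(B(\pi x,L\rho))$, the resulting measure inequality, and the observation that the Lipschitz constant washes out because $\log(L\rho)/\log\rho\to 1$. The handling of the degenerate cases (null balls, infinite local dimension) and the passage from the pointwise inequality to the essential infima are both sound, so there is nothing to add.
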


In this lemma, as well as in the rest of this paper, we denote by $\pi \mu$ the push-forward of $\mu$ through $\pi$, \emph{i.e.} $\pi \mu(A) = \mu (\pi^{-1}A)$ for any Borel subset $A \subset Y$.

Let us also introduce the following definition for the sake of brevity.
\begin{definition}
If $G$ is a Borel group and $H$ a Borel subgroup of $G$, we say that a Borel measure $\mu$ on $G$ is concentrated on a Borel graph over $G/H$ if there is a Borel section $\Sigma$ (with respect to $H$) that has full $\mu$-measure.
\end{definition}
Recall that $\Sigma$ is a section with respect to $H$ if $\Sigma H=G$ and $\Sigma \cap gH = \{g\}$ for any $g \in \Sigma$.

\subsection{Almost sure dimension of projections}
Here we state the classical Theorem of Marstrand on the almost sure dimension of projected measures in Euclidean spaces. Fix some integer $n \geq 2$. For any integer $m$, $1 \leq m \leq n-1$, and any $m$-plane $V$ in $\R^n$, we denote by $\pi_V$ the orthogonal projection $\R^n \to V$. If $\mu$ is a finite measure on $\R^n$, its push-forward through $\pi_V$ is denoted by $\pi_V \mu$.

Recall that the Grassmannian of $m$-planes of $\R^n$ is a Hausdorff compact topological space. It carries a unique probability measure that is invariant under the natural operation of the orthogonal group $O(n)$. When we use the phrase ``for almost every $m$-plane $V$ in $\R^n$'' we mean \emph{with respect to the above probability measure}.

\begin{proposition}[\cite{Mattila}] \label{prop.marstrand}
Let $\mu$ be some probability measure on $\R^n$ and fix some integer $m$, $1 \leq m \leq n-1$. For almost every $m$-plane $V$ in $\R^n$, 
\[ \diminf(\pi_V \mu) = \inf\{m, \diminf(\mu) \} \]
and in the case when $\diminf(\mu) > m$, the projected measure $\pi_V \mu$ is almost surely absolutely continuous (with respect to the Haar measure on $V$).

Furthermore, if $\mu$ is exact-dimensional, then so is $\pi_V \mu$ (almost surely).
\end{proposition}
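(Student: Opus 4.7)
The plan is the classical potential-theoretic approach, organised around the $s$-energies
\[ I_s(\mu) = \iint \frac{d\mu(x)\, d\mu(y)}{|x-y|^s}. \]
The starting point is the standard link with lower local dimension: if $I_s(\mu) < \infty$ then $\diminf(\mu, x) \geq s$ at $\mu$-almost every $x$ (by Chebyshev applied to $\int_0^1 \mu(B(x,r))\, r^{-s-1}\, dr$), and conversely $I_{s'}(\mu) < \infty$ for all $s' < \diminf(\mu)$ (Frostman's lemma).

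The key computation, which is really the heart of Marstrand's theorem, is the Grassmannian integral: for $0 < s < m$ and nonzero $v \in \R^n$,
\[ \int_{G(n,m)} |\pi_V v|^{-s}\, d\gamma(V) = c(n,m,s)\, |v|^{-s}, \]
which follows from $O(n)$-invariance of $\gamma$ (reducing to $v = e_1$) together with a direct integration on the unit sphere; the condition $s < m$ is exactly what makes the singularity integrable. Applying Fubini yields
\[ \int_{G(n,m)} I_s(\pi_V \mu)\, d\gamma(V) = c(n,m,s)\, I_s(\mu), \]
so if $s < \min\{m, \diminf(\mu)\}$ then $I_s(\pi_V \mu) < \infty$ for $\gamma$-almost every $V$, hence $\diminf(\pi_V \mu) \geq s$. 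Letting $s$ run along a countable sequence up to the bound gives the lower estimate $\diminf(\pi_V \mu) \geq \min\{m, \diminf(\mu)\}$ almost surely. The matching upper bound is immediate: the preceding Lipschitz lemma yields $\diminf(\pi_V \mu) \leq \diminf(\mu)$, and the triviality that $\pi_V \mu$ is supported in an $m$-dimensional Euclidean space gives $\diminf(\pi_V \mu) \leq m$.

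For the absolute continuity claim when $\diminf(\mu) > m$, I would switch to Fourier analysis: pick $s$ with $m < s < \diminf(\mu)$ and use the identity $\widehat{\pi_V \mu}(\xi) = \widehat{\mu}(\xi)$ for $\xi \in V$. Integrating $\|\widehat{\pi_V\mu}\|_{L^2(V)}^2$ against $\gamma$ and switching to polar coordinates on $\R^n$ via the Fuglede-type identity produces a constant multiple of $\int_{\R^n} |\widehat{\mu}(\xi)|^2\, |\xi|^{-(n-m)}\, d\xi$, which is controlled by $I_m(\mu) < \infty$ through the standard Fourier representation of $s$-energies. Plancherel then identifies $\pi_V \mu$ with an $L^2$ density against Haar measure on $V$ for $\gamma$-almost every $V$. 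Exact-dimensionality of the projection follows at once: the Lipschitz lemma gives $\dimsup(\pi_V \mu, \pi_V x) \leq \min\{m, \dimsup(\mu, x)\}$, so for exact-dimensional $\mu$ of dimension $\delta$ the upper local dimension of the projection is at most $\min\{m, \delta\}$ almost surely, matching the lower bound obtained from the energy argument.

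The main obstacle is the Grassmannian energy identity in the second paragraph: this is where the threshold $m$ enters and the reason the statement is sharp. Everything else is routine Fubini/Chebyshev bookkeeping for the dimension statement and a textbook Plancherel computation for the absolute continuity upgrade.
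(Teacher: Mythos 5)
The paper offers no proof of this proposition at all --- it is quoted directly from Mattila --- so there is no internal argument to compare against; your potential-theoretic outline (the Grassmannian energy identity for the dimension statement, the Plancherel computation for the absolute-continuity upgrade, and the Lipschitz bound on upper local dimensions for exact-dimensionality) is precisely the standard route of the cited source, and its architecture is correct.

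There is, however, one genuinely false intermediate claim: ``conversely $I_{s'}(\mu)<\infty$ for all $s'<\diminf(\mu)$.'' This fails for general probability measures. Take $\mu=\sum_{i\geq 1}2^{-i}\mu_i$ with $\mu_i$ normalized Lebesgue measure on a ball of radius $r_i$ about well-separated centres: then $\diminf(\mu)=n$, yet $I_{s'}(\mu)\geq c\sum_i 4^{-i}r_i^{-s'}$, which diverges for every $s'>0$ if $r_i\to 0$ fast enough. Since your Fubini step needs $I_s(\mu)<\infty$ on the right-hand side, the argument as written breaks exactly there. The standard repair is exhaustion by restrictions: for $s<s'<\min\{m,\diminf(\mu)\}$ and each $\epsilon>0$ choose $A$ with $\mu(\R^n\setminus A)<\epsilon$ and $I_{s'}(\mu|A)<\infty$, and run the Grassmannian computation on $\mu|A$. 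But beware that the transfer back to $\mu$ is not the naive pointwise inequality, because restriction \emph{increases} lower local dimensions: $\pi_V\mu\geq\pi_V(\mu|A)$ gives $\diminf(\pi_V\mu,y)\leq\diminf(\pi_V(\mu|A),y)$, which points the wrong way. One must instead use the characterization $\diminf(\nu)=\inf\{\dim_H B\ ;\ \nu(B)>0\}$: any $B$ with $\pi_V\mu(B)>0$ satisfies $\pi_V(\mu|A)(B)>0$ once $\epsilon<\pi_V\mu(B)$, hence $\dim_H B\geq s'$ by the finite energy of the projected restriction. The same exhaustion (together with the observation that the singular part of $\pi_V\mu$ has mass at most $\mu(\R^n\setminus A)$) is needed in the absolute-continuity step, where you again invoke $I_m(\mu)<\infty$ for the unrestricted measure. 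With these standard repairs your proof is complete.
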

\subsection{Projections of self-similar measures}
We are going to state an important result of M. Hochman about Hausdorff dimension of projections of self-similar measures. First, we have to pass through several definitions and  notations. The reader is referred to \cite{Hoch} for more details.

\subsubsection{Notations}
We  work in Euclidean $n$-space $\R^n$. The unit ball with respect to the supremum norm, $[-1,1]^n$, is denoted by $B_1$. Let $\mathcal M$ be the space of non-zero Radon measures. For any real number $t$, let $S_t : \R^n \to \R^n$ be the homothety
\[ S_t : x \mapsto e^t x \]
which induces a mapping $S_t : \mathcal M \to \mathcal M$
\[ \mu \mapsto S_t \mu\text. \]
The 'S' stands for \emph{scaling}.

If $\mu \in \mathcal M$ is such that $B_1$ is non-negligible, we denote by $\mu^*$ the normalized measure
\[ \mu^* = \frac{\mu}{\mu(B_1)} \text. \]
and by $\mu^\square$ the conditional measure defined by
\[ \mu^\square(A) = \frac{\mu(A \cap B_1)}{\mu(B_1)} \text. \]

The set of all measures $\mu \in \mathcal M$ such that $\mu(B_1)=1$ is denoted by $\mathcal M^*$; the set of all probability measures $\mu \in \mathcal M$ supported on $B_1$ is denoted by $\mathcal M^\square$.

If $\mu \in \mathcal M$ is such that $0$ belongs to the support of $\mu$, we may consider $(S_t \mu)^*$ and $(S_t \mu)^\square$ for any real number $t$. We then write
\[ S_t^* \mu = (S_t \mu)^*,\quad S_t^\square \mu = (S_t \mu)^\square \text. \]

For any $x \in \R^n$, let $T_x$ be the translation $y \mapsto y-x$. If $\mu$ belong to $\mathcal M$, we write $T_x^* \mu = (T_x \mu)^*$; this is well-defined for any $x$ in the support of $\mu$.

\subsubsection{Fractal distributions}
Following Hochman \cite{Hoch}, probability measures on spaces of measures are called \emph{distributions}, and denoted by the letter $P$. We keep the letter $\mu$ for measures on Euclidean spaces. For example, if $\mu$ is some element of $\mathcal M$, the Dirac measure at $\mu$, $\delta_\mu$, is a distribution on $\mathcal M$.

\begin{definition}
A \emph{fractal distribution} is a distribution on $\mathcal M^*$ that satisfies the following conditions:
\begin{enumerate}
\item given any relatively compact neighbourhood $U$ of $0$ in $\R^n$, the distribution
\[ \int \mathrm{d} P(\mu) \int_U \delta_{T_x^* \mu} \mathrm{d} \mu(x) \]
on $\mathcal M^*$ is equivalent to $P$ (which means that each of them is absolutely continuous with respect to the other);
\item for any real number $t$, $S_t^* P=P$; in other words, $P$ is invariant under the flow $(S_t^*)_t$.
\end{enumerate}
If, furthermore, $P$ is ergodic with respect to $(S_t^*)$, we say that $P$ is an ergodic fractal distribution.
\end{definition}

Note that if $P$ satisfies condition 1 (such a distribution $P$ is called \emph{quasi-Palm}), its image $P^\square$ through the (not everywhere defined) mapping $\mu \mapsto \mu^\square$ is a well-defined distribution on $\mathcal M^\square$. Indeed, condition 1 implies that $0$ belongs to the support of $\mu$ for $P$-almost every $\mu$.

A distribution on $\mathcal M^\square$ that may be written $P^\square$, where $P$ is an ergodic fractal distribution, is called a \emph{restricted ergodic fractal distribution}. 

\subsubsection{Uniformly scaling measures}
Let $\mu$ be some Radon measure on $\R^n$ and let $x$ some point in the support of $\mu$. We denote by $\mu_{x,t}$ the image of $\mu$ through the composition $S_t \circ T_x$ ($t \in \R$) and  consider the probability measure $\mu_{x,t}^\square$. Let $\langle \mu \rangle_{x,T}$ be the distribution on $\mathcal M^\square$
\[ \langle \mu \rangle_{x,T} = \frac{1}{T} \int_0^T \delta_{\mu_{x,t}^\square} \mathrm{d}t \text. \]

If there exists a distribution $P$ on $\mathcal M^\square$ such that, for $\mu$-almost every $x$, $\langle \mu \rangle_{x,T}$ converges (weakly) to $P$ as $T \to \infty$, we say that $\mu$ is \emph{uniformly scaling}, and that it \emph{generates} $P$.

The following non-obvious Theorem is stated for clarity, as we will not need it.

\begin{theorem}[\cite{Hoch}]
If $\mu$ is a uniformly scaling measure generating a distribution $P$, $P$ is a restricted ergodic fractal distribution. Conversely, if $P$ is an ergodic fractal distribution, then $P$-almost every $\mu$ is a uniformly scaling measure generating $P^\square$.
\end{theorem}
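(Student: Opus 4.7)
The plan is to treat the statement as an instance of the Birkhoff ergodic theorem for the scaling (semi-)flow $S_t^\square$ on $\mathcal M^\square$ and its lift $S_t^*$ on $\mathcal M^*$, with the \emph{quasi-Palm property} playing the role of a bridge between sceneries at a fixed base point and the global dynamics on the space of measures.

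\textbf{Direct implication.} Assume $\mu$ is uniformly scaling with limit $P$ on $\mathcal M^\square$. Since $\langle \mu \rangle_{x,T}$ and $S_s^\square \langle \mu \rangle_{x,T}$ differ by a boundary term of order $s/T$, the limit $P$ is invariant under $(S_t^\square)_t$. I would then construct a distribution $\tilde P$ on $\mathcal M^*$ whose image under $\nu \mapsto \nu^\square$ is $P$; the natural candidate, suggested by the quasi-Palm axiom itself, is the ``re-centred'' distribution obtained by choosing $\nu$ according to $P$, then $x$ according to $\nu$, then returning $T_x^* \nu$. Verifying that $\tilde P$ is scale-invariant, quasi-Palm, and ergodic occupies the bulk of this half of the argument: scale-invariance and quasi-Palm follow by manipulating the defining iterated integral, while ergodicity descends from the hypothesis that the \emph{same} limit $P$ is obtained for $\mu$-almost every base point $x$, which forbids a nontrivial invariant partition of the support of $\tilde P$.

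\textbf{Converse.} Let $P$ be an ergodic fractal distribution on $\mathcal M^*$. Birkhoff's theorem applied to $(\mathcal M^*, (S_t^*)_t, P)$ yields, for $P$-almost every $\mu$, convergence of the orbital averages $\frac{1}{T} \int_0^T \delta_{S_t^* \mu}\, dt$ to $P$; pushing forward through $\nu \mapsto \nu^\square$ then gives $\langle \mu \rangle_{0,T} \to P^\square$. The delicate step is to upgrade this from the base point $0$ to $\mu$-almost every base point $x$. This is precisely what the quasi-Palm property buys us: viewing $dP(\nu)\, d\nu(x)$ as a joint law on pairs, quasi-Palm says the distribution of $T_x^* \nu$ is equivalent to $P$, so the Birkhoff full-measure set is $\mu$-almost surely hit after translating to any typical base point $x$, and a routine Fubini argument together with the compatibility of $S_t^*$ and $S_t^\square$ concludes.

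\textbf{Main obstacle.} In both directions the key interplay is between the \emph{pointwise} viewpoint (sceneries at a prescribed $x$) and the \emph{measure-theoretic} viewpoint (a $P$-typical $\mu$). The quasi-Palm property is designed to mediate between them, but turning it into a clean Fubini-type argument requires careful treatment of the partially defined, discontinuous restriction map $\nu \mapsto \nu^\square$ and of the normalisation constants arising when one passes between $\mathcal M^*$ and $\mathcal M^\square$. Establishing quasi-Palm for the lifted distribution $\tilde P$ in the precise technical sense of the definition, rather than merely along a convenient family of neighbourhoods of $0$, is where the genuine technical weight of Hochman's argument lies.
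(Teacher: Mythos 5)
First, a point of comparison that matters here: the paper does not prove this statement at all. It is quoted from Hochman \cite{Hoch} and introduced with the explicit caveat that it is ``stated for clarity, as we will not need it,'' so there is no in-paper proof to measure your attempt against. Judged on its own terms, your sketch of the \emph{converse} direction is sound in outline and is essentially the standard argument: Birkhoff's theorem for $(S_t^*)_t$ on $(\mathcal M^*, P)$ gives equidistribution of the scenery at the origin for $P$-almost every $\mu$, and the quasi-Palm property transports the Birkhoff-generic set from the origin to $\mu$-almost every base point. This is in fact exactly the mechanism the paper itself deploys later, in the lemma asserting that $\sigma^\square(x)$ is uniformly scaling, where the equivariance of conditional measures plays the role of quasi-Palm.

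The forward direction, however, contains genuine gaps. Your candidate lift $\tilde P$ is built by re-centring $P$, but $P$ lives on $\mathcal M^\square$, i.e.\ on measures supported on $B_1$, so $T_x^*\nu$ for $\nu$ drawn from $P$ carries no information outside a translate of $B_1$; the extended distribution has to be obtained instead as a weak limit of the unrestricted scenery distributions $\frac{1}{T}\int_0^T \delta_{(S_t T_x\mu)^*}\,\mathrm{d}t$ on $\mathcal M^*$, after which one must still show that restriction intertwines the two constructions. More seriously, establishing the quasi-Palm property for that limit is the technical heart of Hochman's theorem and does not follow from ``manipulating the defining iterated integral'': in \cite{Hoch} it goes through CP-distributions (Furstenberg's conditional-measure processes on a dyadic filtration) and a comparison between the dyadic and translation-invariant scenery flows. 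Finally, your ergodicity argument is not valid as stated: the fact that the sceneries at $\mu$-almost every $x$ equidistribute to the \emph{same} $P$ does not forbid $P$ from being a nontrivial convex combination of invariant distributions, because the starting points $T_x^*\mu$ need not be $P$-generic in the required sense; one can interleave two distinct self-similar constructions along alternating ranges of scales to produce a uniformly scaling measure whose generated distribution is a nontrivial average. Whatever ergodicity statement one wants must be routed through the ergodic decomposition theorem for fractal distributions, which is itself a separate nontrivial result of \cite{Hoch}.
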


\subsubsection{Mean dimension of a fixed projection}
Let $P$ be a distribution on $\mathcal M^\square$. Let $\pi$ be some linear mapping from $\R^n$ onto $\R^m$, $1 \leq m \leq n-1$. Let
\[ E_P(\pi) = \int \mathrm{d} P(\mu)\ \diminf(\pi \mu) \text. \]
\begin{theorem}[\cite{Hoch}] \label{th.hochman}
  Let $\mu$ be a uniformly scaling probability measure on $\R^n$ and let $P$ be the restricted ergodic fractal distribution generated by $\mu$. For any linear mapping $\pi$ from $\R^n$ onto $\R^m$, $1 \leq m \leq n-1$, 
\[ \diminf (\pi \mu) \geq E_P(\mu) \text. \]
\end{theorem}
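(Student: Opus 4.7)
The strategy rests on three ingredients: a scenery formula for local lower dimensions of $\pi\mu$, the identification of $E_P(\pi)$ as the $P$-typical local lower dimension at the origin, and the transfer of this information via the uniform-scaling property of $\mu$.

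First, for $x$ in the support of $\mu$, the intertwinings $\pi\circ S_t = S_t\circ\pi$ and $\pi\circ T_x = T_{\pi x}\circ\pi$ give $\pi\mu(B_m(\pi x, e^{-t})) = \pi\mu_{x,t}(B_m(0,1))$, hence
\[
\diminf(\pi\mu,\pi x) = \liminf_{t\to\infty}\frac{-\log\pi\mu_{x,t}(B_m(0,1))}{t}.
\]
Moreover $\pi\mu_{x,t+s}(B_m(0,1))=\pi\mu_{x,t}(B_m(0,e^{-s}))$, and swapping $\mu_{x,t}$ for its normalisation $\mu_{x,t}^\square$ introduces only the multiplicative correction $\mu(B_n(x,e^{-t}))$, whose logarithm grows at most linearly in $t$ since $\mu$ has finite lower dimension.

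Second, since $P$ is $(S_t^*)$-invariant and ergodic, $\diminf(\pi\nu)$ is $P$-almost surely constant, necessarily equal to $E_P(\pi)$. The quasi-Palm property of $P$ identifies the generic local dimension of $\pi\nu$ with the local dimension at the origin, so $\diminf(\pi\nu,0)=E_P(\pi)$ for $P$-a.e. $\nu$. Fatou's lemma applied as $s_0 \to \infty$ then yields, for every $\epsilon>0$, some fixed scale $s_0$ with
\[
\int_{\mathcal M^\square}\!-\log\pi\nu(B_m(0,e^{-s_0}))\,dP(\nu)\ \geq\ s_0\bigl(E_P(\pi)-\epsilon\bigr).
\]

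Third, the observable $\nu\mapsto-\log\pi\nu(B_m(0,e^{-s_0}))$ is weak-$*$ lower semicontinuous on $\mathcal M^\square$ (since $\nu\mapsto\pi\nu(B)$ is upper semicontinuous for closed $B$). Because $\langle\mu\rangle_{x,T}\to P$ weakly for $\mu$-a.e. $x$ by uniform scaling, the Portmanteau inequality for lower semicontinuous observables gives
\[
\liminf_{T\to\infty}\frac{1}{T}\int_0^T\!-\log\pi\mu_{x,t}^\square(B_m(0,e^{-s_0}))\,dt\ \geq\ s_0\bigl(E_P(\pi)-\epsilon\bigr).
\]
Converting back through the scenery identity of the first paragraph and absorbing the normalisation correction yields $\diminf(\pi\mu,\pi x)\geq E_P(\pi)-\epsilon$ at $\mu$-a.e. $x$; letting $\epsilon\to 0$ concludes.

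The main obstacle is precisely this last step: translating a Cesaro-type lower bound on a scenery observable into a pointwise liminf bound across all scales $t$. A priori a density-zero subsequence of "bad" scales $t_k$ could depress the liminf without affecting the Cesaro average. Controlling this requires iterating the argument over a dense family of thresholds $s_0$ and exploiting the full strength of the quasi-Palm invariance of $P$ to rule out such bad excursions $P$-almost surely before transferring the bound to $\mu$.
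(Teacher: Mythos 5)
First, note that the paper does not prove this statement: Theorem \ref{th.hochman} is imported verbatim from \cite{Hoch} (Hochman, building on Hochman--Shmerkin), so there is no internal proof to compare against; your attempt has to be judged against the argument in that reference.

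Your first two ingredients are essentially sound (the scenery identity, the $P$-a.s.\ constancy of $\diminf(\pi\nu)$ via ergodicity and quasi-Palm, and the Fatou step to fix a scale $s_0$, as is the Portmanteau step using lower semicontinuity of $\nu\mapsto-\log\pi\nu(\overline{B})$). The genuine gap is the one you flag yourself, but it is worse than a ``density-zero set of bad scales'' issue: the conversion step fails for a structural reason. Writing $f(t)=-\log\pi\mu\bigl(B_m(\pi x,e^{-t})\bigr)$, one has
\[
\pi\mu_{x,t}^\square\bigl(B_m(0,e^{-s_0})\bigr)=\frac{\mu\bigl(B_n(x,e^{-t})\cap\pi^{-1}B_m(\pi x,e^{-t-s_0})\bigr)}{\mu\bigl(B_n(x,e^{-t})\bigr)}\ \leq\ \frac{\pi\mu\bigl(B_m(\pi x,e^{-t-s_0})\bigr)}{\mu\bigl(B_n(x,e^{-t})\bigr)}\text,
\]
so your observable only \emph{lower}-bounds $f(t+s_0)$ minus a normalisation term; to make the Cesaro average of the observable control $f(T)/T$ you would need the reverse inequality, namely that the scenery observable is bounded \emph{above} by the increment $f(t+s_0)-f(t)$, i.e.\ that the proportion of the mass of the slab $\pi^{-1}B_m(\pi x,e^{-t})$ lying in the thinner slab is detected inside the ball $B_n(x,e^{-t})$. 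That is false in general: $\pi\mu(B_m(\pi x,e^{-t-s_0}))$ can be fed by mass lying far from $x$ in the fibre direction, which the scenery $\mu_{x,t}^\square$ never sees, so no telescoping of your observable reconstructs $f$. Iterating over thresholds $s_0$ and invoking quasi-Palm invariance does not repair this. The missing idea in \cite{Hoch} (and Hochman--Shmerkin) is to replace the pointwise mass observable by a \emph{conditional entropy} observable, $\nu\mapsto H(\pi\nu,\mathcal{D}_1)$ evaluated on renormalised dyadic components: the chain rule for entropy makes the telescoping across scales exact, a martingale law of large numbers (the ``local entropy averages'' lemma) converts the resulting Cesaro bound into a pointwise a.e.\ lower bound on $\diminf(\pi\mu,\pi x)$, and almost-continuity of the entropy observable lets one pass to the limit distribution $P$. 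Without that substitution your scheme cannot close.
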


\begin{corollary} \label{corollary.lol}
Assume, furthermore, that $P$ is invariant under the natural operation of the special orthogonal group $\mathbf{SO}(n)$. Then 
\[ \diminf (\pi \mu) = \inf \{ m, \diminf (\mu) \} \text. \]
\end{corollary}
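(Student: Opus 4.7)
The plan is to combine Hochman's Theorem \ref{th.hochman} with Marstrand's Proposition \ref{prop.marstrand} through the $\mathbf{SO}(n)$-symmetry of $P$. The trivial upper bound $\diminf(\pi\mu) \leq \inf\{m, \diminf(\mu)\}$ follows from $\pi\mu$ living on $\R^m$ together with the Lipschitz lemma (linear maps being Lipschitz), so it suffices to establish the matching lower bound. By Theorem \ref{th.hochman} this reduces to showing
\[
E_P(\pi) \geq \inf\{m, \diminf(\mu)\} \text.
\]

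The first step is to observe that $E_P(\pi)$ depends only on the rank-$m$ shape of $\pi$. Any linear surjection $\pi : \R^n \to \R^m$ factors as $\pi = L \circ \pi_V$, where $V = (\ker\pi)^\perp$ is an $m$-plane in $\R^n$ and $L$ is a linear isomorphism of $\R^m$; since $L$ is bi-Lipschitz we have $\diminf(\pi\nu) = \diminf(\pi_V \nu)$ for every $\nu$, and hence $E_P(\pi) = E_P(\pi_V)$. For $g \in \mathbf{SO}(n)$, the elementary identity $\pi_V \circ g = g \circ \pi_{g^{-1}V}$ (immediate from orthogonality of $g$) together with the hypothesis $g_* P = P$ yields
\[
E_P(\pi_V) = \int \diminf(\pi_V(g\nu))\,\mathrm{d} P(\nu) = \int \diminf(\pi_{g^{-1}V}\nu)\,\mathrm{d} P(\nu) = E_P(\pi_{g^{-1}V}) \text.
\]
Since $\mathbf{SO}(n)$ acts transitively on the Grassmannian of $m$-planes, $V \mapsto E_P(\pi_V)$ is constant.

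Next I would evaluate this constant by applying Marstrand's theorem pointwise in $\nu$: for $P$-almost every $\nu$ and almost every $m$-plane $V$, $\diminf(\pi_V \nu) = \inf\{m, \diminf(\nu)\}$. Fubini then produces an $m$-plane $V_0$ for which the equality holds for $P$-a.e. $\nu$, and the previous step yields
\[
E_P(\pi) = E_P(\pi_{V_0}) = \int \inf\{m, \diminf(\nu)\}\,\mathrm{d} P(\nu) \text.
\]
To identify this integral with $\inf\{m, \diminf(\mu)\}$ I would appeal to the structure theorem for ergodic fractal distributions from \cite{Hoch}: $P$-almost every $\nu$ is exact-dimensional with a common dimension $\delta$, and $\delta$ coincides with $\diminf(\mu)$ for every uniformly scaling $\mu$ generating $P$. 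I expect this last invocation to be the principal subtlety, since it draws on a genuine piece of Hochman's theory beyond what is recalled in the section; the remainder of the argument is a clean averaging combining Marstrand with the $\mathbf{SO}(n)$-invariance of $P$.
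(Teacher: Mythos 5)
Your proposal is correct and is essentially the paper's own argument spelled out in full: the paper's proof is the one-line ``apply Proposition \ref{prop.marstrand} and Fubini's Theorem,'' which is exactly your averaging of Marstrand's theorem over the $\mathbf{SO}(n)$-invariant distribution $P$ to evaluate the constant $E_P(\pi_V)$, combined with Theorem \ref{th.hochman} for the lower bound. You are also right that the final identification of $\int \inf\{m,\diminf(\nu)\}\,\mathrm{d}P(\nu)$ with $\inf\{m,\diminf(\mu)\}$ rests on the exact-dimensionality results for ergodic fractal distributions and the measures generating them, which the paper uses implicitly without restating them from \cite{Hoch}.
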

\begin{proof}
Apply proposition \ref{prop.marstrand} and Fubini's Theorem.
\end{proof}

\section{Patterson-Sullivan theory}
We recall some classical results and definitions. Let $G=\mathbf{SO}^o(1,n+1)$ ($n \geq 2$) be the group of direct isometries of the real hyperbolic $(n+1)$-plane $\Hr^{n+1}$. We fix an Iwasawa decomposition $G=KAN$, and $\Hr^{n+1}$ identifies with the quotient manifold $G/K$; thus, $K$ is the stabilizer of some fixed point $o \in \Hr^{n+1}$. The boundary at infinity of $\Hr^{n+1}$ is denoted by $\partial \Hr^{n+1}$.

Recall the classical Busemann function, 
\[ b_\xi(x,y) = \lim_{t \to \infty} d(x,\xi_t) - d(y,\xi_t) \]
for any $x,y \in \Hr^{n+1}$, $\xi \in \partial \Hr^{n+1}$, where $t \mapsto \xi_t$ is some geodesic with positive endpoint $\xi$ (\emph{i.e.} $\xi = \lim_{t \to \infty} \xi_t$), and $d$ is the hyperbolic distance in $\Hr^{n+1}$.

Good references for this section are \cite{Roblin},\cite{LNQuint} and \cite{SchapiraPaulinPollicott}.
\subsection{Limit set and growth exponent}

Let $\Gamma$ be a discrete subgroup of $G$. If $x$ is some point of $\Hr^{n+1}$, the set of accumulation points of the orbit $\Gamma \cdot x$ on $\Hr^{n+1} \cup \partial \Hr^{n+1}$ is a subset $\Lambda_\Gamma$ of the boundary, namely $\Lambda_\Gamma = \overline{\Gamma \cdot x} \cap \partial  \Hr^{n+1}$. This set does not depend on $x$. It is called the limit set of $\Gamma$. If $\Lambda_\Gamma$ is a finite set, $\Gamma$ is called elementary, otherwise $\Gamma$ is called non-elementary.

The growth exponent of $\Gamma$,
\[ \delta_\Gamma = \limsup_{R \to \infty} \frac{1}{R} \mathrm{Card} \{\gamma \in \Gamma\ ;\ d(x,\gamma x) \leq R \} \]
does not depend on $x$. It is a finite number, $0 < \delta_\Gamma \leq n$. 

Our main interest here is in studying the geometry of Patterson-Sullivan measures. We now introduce these measures.
\subsection{Patterson-Sullivan measures}
\begin{definition}
Let $\Gamma$ be a non-elementary discrete subgroup of $G$. Let $\beta$ be some real number $\geq 0$. A $\Gamma$-conformal density of exponent $\beta$ is a family $(\mu_x)_{x \in \Hr^{n+1}}$ of finite measures on $\partial \Hr^{n+1}$ which satisfies 
\begin{enumerate}
\item $\Gamma$-equivariance: 
\[ \gamma \mu_x = \mu_{\gamma x} \]
for any $x \in \Hr^{n+1}$ and any $\gamma \in \Gamma$.
\item Conformity: for any $x,y \in \Hr^{n+1}$, $\mu_x$ and $\mu_y$ are equivalent measures and the Radon-Nikodym derivative is given by
\[ \frac{\mathrm{d} \mu_y}{\mathrm{d} \mu_x}(\xi) = e^{-\beta b_\xi(y,x)} \]
almost everywhere.
\end{enumerate}
\end{definition}

The following well-known Theorem is basic.
\begin{theorem}[\cite{Roblin}]
Let $\Gamma$ be a non-elementary discrete subgroup of $G$, with growth exponent $\delta_\Gamma$. There exist a $\Gamma$-conformal density of exponent $\delta_\Gamma$. 
\end{theorem}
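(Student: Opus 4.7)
The plan is to carry out the classical Patterson construction, i.e.\ to build the density as a weak-$*$ limit of normalized Poincaré sums and to verify the two defining properties in the limit.

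Fix a base-point $x_0 \in \Hr^{n+1}$. For $s>\delta_\Gamma$ the Poincaré series
\[ \mathcal{P}(s,x) = \sum_{\gamma \in \Gamma} e^{-s\, d(x,\gamma x_0)} \]
converges, and I would first form, for each $x \in \Hr^{n+1}$, the finite Radon measure on $\overline{\Hr^{n+1}} = \Hr^{n+1} \cup \partial \Hr^{n+1}$
\[ \mu_x^s \;=\; \frac{1}{\mathcal{P}(s,x_0)} \sum_{\gamma \in \Gamma} e^{-s\, d(x,\gamma x_0)} \delta_{\gamma x_0}. \]
By construction $\mu_x^s$ has total mass $\mathcal{P}(s,x)/\mathcal{P}(s,x_0) \leq e^{s\, d(x,x_0)}$, so the family is locally uniformly bounded in $s$; moreover an elementary computation gives the exact covariance relations
\[ \gamma_* \mu_x^s = \mu_{\gamma x}^s \quad\text{and}\quad \frac{\mathrm{d}\mu_y^s}{\mathrm{d}\mu_x^s}(z) = e^{-s(d(y,z)-d(x,z))}, \]
which are the discrete analogues of the equivariance and conformity axioms.

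Next I would distinguish the two cases. \emph{Divergent case}: if $\mathcal{P}(s,x_0) \to \infty$ as $s \searrow \delta_\Gamma$, then any atom $\delta_{\gamma x_0}$ receives vanishing mass, so any weak-$*$ accumulation point of $(\mu_x^s)$ along a sequence $s_n \searrow \delta_\Gamma$ is a finite measure supported on $\partial \Hr^{n+1}$. Using a diagonal extraction over a countable dense set of base-points $x$ of $\Hr^{n+1}$ (the set of $x$'s matters because the covariance formula must hold simultaneously) gives a family $(\mu_x)$ on $\partial \Hr^{n+1}$. Passing to the limit in the two displayed identities above, and using $d(y,\gamma x_0) - d(x,\gamma x_0) \to b_\xi(y,x)$ as $\gamma x_0 \to \xi \in \partial \Hr^{n+1}$, yields precisely the $\Gamma$-equivariance and the Radon-Nikodym relation with exponent $\delta_\Gamma$ required by the definition.

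\emph{Convergent case}: here I would invoke Patterson's trick. Choose a non-decreasing slowly varying function $h : [0,\infty) \to (0,\infty)$ (so that $h(t+c)/h(t) \to 1$) for which the modified series
\[ \tilde{\mathcal{P}}(s,x) = \sum_{\gamma \in \Gamma} h(d(x,\gamma x_0))\, e^{-s\, d(x,\gamma x_0)} \]
still converges for $s>\delta_\Gamma$ but diverges at $s = \delta_\Gamma$. The existence of such an $h$ is a standard elementary lemma. Repeating the construction with these weights and using the slow variation of $h$ to absorb the error in the covariance formula in the limit $s \searrow \delta_\Gamma$ gives a density with the same exponent $\delta_\Gamma$.

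Finally I would observe that the limit measure is non-trivial because the renormalization forces $\mu_{x_0}^s$ to be a probability measure. The main obstacle is of course the convergent case: ensuring that the mass does not escape to infinity while keeping the asymptotic covariance intact, which is exactly what Patterson's modification is designed to do.
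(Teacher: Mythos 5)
Your proposal is correct and is precisely the classical Patterson construction; the paper itself gives no proof and simply cites \cite{Roblin}, where this is the argument used. The only cosmetic remark is that the diagonal extraction over a dense set of base-points is not needed: once $\mu_{x_0}^{s_n}$ converges, the other $\mu_x^{s_n}$ converge automatically since their densities $z \mapsto e^{-s_n(d(x,z)-d(x_0,z))}$ extend continuously to the compactification and converge uniformly.
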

If $(\mu_x)$ is such a density, the measures $\mu_x$ are called Patterson-Sullivan measures.

\subsection{The Bowen-Margulis-Sullivan measure}
We will identify the unit tangent bundle $T^1 \Hr^{n+1}$  with the quotient space $G/M$, where $M$ is the centralizer, in $K$, of the Cartan subgroup $A$ (recall that we fixed an Iwasawa decomposition $G=KAN$). For more details on this identification see \cite{Winter}. Through this isomorphism $T^1 \Hr^{n+1} \simeq G/M$, the geodesic flow on $T^1 \Hr^{n+1}$ identifies with the right operation of $A$ on $G/M$, $(gM, a) \mapsto gaM$. More precisely, there is a (unique) isomorphism $\R \to A$, $t \mapsto a_t$, such that the geodesic flow on $T^1 \Hr^{n+1}$ identifies with the operation of $\R$ on $G/M$ given by $(t,gM) \mapsto ga_tM$.

The Hopf isomorphism is the bijective mapping from $T^1 \Hr^{n+1}$ onto $\partial^2 \Hr^{n+1} \times \R$, that maps the unit tangent vector $u$ with base point $x$ to the triple
\[ (u^-,u^+, b_{u^-}(x,o)) \]
where $u^-$ and $u^+$ are the negative and positive, respectively, endpoints of the geodesic whose derivative at $t=0$ is $u$. Here we denote by $\partial^2 \Hr^{n+1}$ the set of all $(\xi,\eta) \in \partial \Hr^{n+1} \times \partial \Hr^{n+1}$ such that $\xi \neq \eta$. 

We will  write, by an abuse of language, $u=(\xi,\eta,s)$, meaning $\xi=u^-$, $\eta=u^+$ and $s=b_{u^-}(x,o)$ (where $x$ is the base point of $u$).

Now, let $\Gamma$ be a discrete non-elementary subgroup of $G$. Let $\mu$ be a $\Gamma$-conformal density of exponent $\delta_\Gamma$. Fix some arbitrary point $x \in \Hr^{n+1}$. We define the BMS measure $\BMS$ on the unit tangent bundle of $\Hr^{n+1}$, $T^1 \Hr^{n+1}$:
\begin{equation} \label{eq.BMS}
\mathrm{d} \BMS(u) = e^{\delta_\Gamma(b_\xi(x,u)+b_\eta(x,u))} \mathrm{d} \mu_x(\xi) \mathrm{d} \mu_x(\eta) \mathrm{d}s\text.
\end{equation}
This Radon measure does not depend on the choice of $x$. It is invariant under the geodesic flow as well as under $\Gamma$. Consequently, the measure on $\Gamma \backslash T^1 \Hr^{n+1}$ defined by passing to the quotient is a Radon measure that is invariant under the geodesic flow. Equivalently, we obtain a Radon measure on $\Gamma \backslash G/M$, that is $A$-invariant on the right.
\begin{definition}
We say that a discrete non-elementary subgroup $\Gamma$ of $G$ has finite BMS measure if the associated Bowen-Margulis-Measure on $\Gamma \backslash T^1 \Hr^{n+1}$  is finite.
\end{definition}

\begin{theorem}[\cite{Roblin}]
Let $\Gamma$ be a discrete non-elementary subgroup of $G$. If $\Gamma$ has finite BMS measure, the $\Gamma$-conformal density of exponent $\delta_\Gamma$ is unique, atomless, its support is the limit set $\Lambda_\Gamma$, and the conical limit set $\Lambda_\Gamma^c$ has full measure. Furthermore, the BMS measure is (strongly) mixing with respect to the geodesic flow.
\end{theorem}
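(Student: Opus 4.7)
The plan is to reduce every statement to the \emph{Hopf-Tsuji-Sullivan dichotomy}, which says that the Poincaré series $\sum_{\gamma \in \Gamma} e^{-\delta_\Gamma d(o,\gamma o)}$ either converges (in which case $\mu_o$ is purely atomic, concentrated on non-conical points, and the BMS measure is totally dissipative) or diverges (in which case the conical limit set has full $\mu_o$-measure, $\mu_o$ is atomless, the conformal density is unique up to scalar, and the geodesic flow is conservative and ergodic). My first task would thus be to show that finiteness of the BMS measure forces the divergent case. The main tool here is the Sullivan shadow lemma: for $r$ large enough and any $\gamma \in \Gamma$, $\mu_o(\mathcal{O}_r(\gamma o)) \asymp e^{-\delta_\Gamma d(o,\gamma o)}$, where $\mathcal{O}_r(\gamma o)$ is the shadow (on $\partial \Hr^{n+1}$) of the ball of radius $r$ around $\gamma o$. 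A Borel-Cantelli argument based on the shadow lemma then shows that convergence of the Poincaré series would force $\mu_o$-almost every boundary point to be non-conical, and, unwinding through formula \eqref{eq.BMS}, this would make the BMS mass of a relatively compact set of $\Gamma\backslash T^1\Hr^{n+1}$ infinite, contradicting the hypothesis.

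With divergence type established, the first three conclusions follow from classical arguments. The inclusion $\mathrm{supp}(\mu_o) \subset \Lambda_\Gamma$ is built into Patterson's construction (as weak limits of normalized orbital measures), and the reverse inclusion comes from $\Gamma$-equivariance together with the minimality of the $\Gamma$-action on $\Lambda_\Gamma$. Atomlessness follows from a direct Poincaré-series estimate: an atom at $\xi \in \Lambda_\Gamma$ would, via $\Gamma$-equivariance and the conformality relation, produce atoms $\mu_o(\{\gamma \xi\}) \gtrsim e^{-\delta_\Gamma d(o,\gamma o)}$ for every $\gamma$, whose sum is the divergent Poincaré series. Full $\mu_o$-measure of $\Lambda_\Gamma^c$ is proved by a Hopf-style argument using the product structure of the BMS measure in Hopf coordinates $\partial^2 \Hr^{n+1} \times \R$, together with the shadow lemma. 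Uniqueness of the conformal density is then immediate: any two densities of exponent $\delta_\Gamma$ are mutually absolutely continuous and the Radon-Nikodym derivative is $\Gamma$-invariant, hence constant by ergodicity of the $\Gamma$-action on $(\Lambda_\Gamma, \mu_o)$.

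The strong mixing statement is the deepest one, and I expect it to be the main obstacle. I would deduce it from a general criterion of the following shape: an ergodic, conservative, finite flow of BMS type is mixing as soon as the \emph{length spectrum} of $\Gamma$ (the set of translation lengths of loxodromic elements) generates a non-cyclic subgroup of $\R$. The non-arithmeticity of the length spectrum is the delicate point; for a non-elementary discrete subgroup of $\mathbf{SO}^o(1,n+1)$ one can settle it by a ping-pong construction producing two loxodromic elements whose translation lengths have irrational ratio, using that the axes of two loxodromic elements in general position can be perturbed continuously within the semigroup generated by them. Combined with the ergodicity obtained in the previous paragraph, this yields the claimed mixing. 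In short: the whole theorem is driven by the shadow lemma plus the Hopf-Tsuji-Sullivan dichotomy, with mixing as an extra layer that relies on the geometry of $\mathbf{SO}^o(1,n+1)$.
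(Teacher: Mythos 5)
The paper offers no proof of this statement---it is quoted directly from Roblin's memoir \cite{Roblin}---and your outline is essentially the proof given there: the shadow lemma and the Hopf--Tsuji--Sullivan dichotomy (with finiteness of the BMS measure forcing divergence type) yield uniqueness, atomlessness, the support statement and the full measure of the conical limit set, while mixing follows from Babillot's criterion once the length spectrum is shown to generate a dense subgroup of $\R$. Two soft spots are worth noting but are not fatal: the contradiction in the convergence case is cleanest via Poincar\'e recurrence (any finite invariant measure of a flow is conservative, whereas a null conical limit set forces total dissipativity) rather than via ``infinite mass of a compact set''; and the non-arithmeticity of the length spectrum for non-elementary subgroups of $\mathbf{SO}^o(1,n+1)$ rests on Dal'bo's cross-ratio argument, not on ``continuously perturbing'' axes inside a (countable) semigroup, which as phrased does not make sense.
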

We will always assume that $\Gamma$ has finite BMS measure, so that the Patterson-Sullivan measure is essentially unique and we may say, by an abuse of language,  ``let $\mu$ be \emph{the} Patterson-Sullivan measure associated to $\Gamma$''.

The Bowen-Margulis-Measure on $\Gamma \backslash G/M$ does not exactly suit our needs, because, as we said, $N$ does not act on the right on this space. Hence we are led to consider the unique $M$-invariant lifting of this measure to $\Gamma \backslash G$. We still call this measure on $\Gamma \backslash G$ the BMS measure. Note that the right action of $A$ on $\Gamma \backslash G/M$ extends to a right action on $\Gamma \backslash G$. The space $\Gamma \backslash G$ is sometimes called \emph{the frame bundle of $\Gamma \backslash \Hr^{n+1}$}, and the operation of $\R$ on $\Gamma \backslash G$, $(t, \Gamma g) \mapsto \Gamma g a_t$ is called the \emph{frame flow}.  The following Theorem is crucial.
\begin{theorem}[\cite{Winter}] \label{th.dale}
Let $\Gamma$ be a discrete non-elementary subgroup of $G$. Assume that $\Gamma$ is Zariski-dense and has finite BMS measure. Then the BMS measure on $\Gamma \backslash G$ is (strongly) mixing under the right operation of $A$.
\end{theorem}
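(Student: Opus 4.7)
The plan is to bootstrap mixing of the geodesic flow on $\Gamma \backslash G/M$ (recalled just above) to mixing of the right $A$-action on $\Gamma \backslash G$. Since the BMS measure on $\Gamma \backslash G$ is by construction the $M$-invariant lift of $\BMS$, and $A$ centralizes $M$, the natural tool is a Peter--Weyl style decomposition of $L^2(\Gamma \backslash G, \BMS)$ with respect to the right regular representation of the compact group $M$.

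First I would write
\[ L^2(\Gamma \backslash G, \BMS) \;=\; \bigoplus_{\sigma \in \hat M} H_\sigma, \]
where $H_\sigma$ is the $\sigma$-isotypic component, with $H_0$ identifying with $L^2(\Gamma \backslash G/M, \BMS)$. Because $A$ and $M$ commute, the right $A$-action preserves each $H_\sigma$; mixing of the frame flow is then equivalent to matrix-coefficient decay
\[ \langle f \cdot a_t, g \rangle \longrightarrow 0 \quad (t \to +\infty) \]
for every $\sigma$ and every $f,g \in H_\sigma$ (with $f,g$ orthogonal to constants when $\sigma$ is trivial). On $H_0$ this is exactly the mixing of the geodesic flow on $\Gamma \backslash G/M$ and requires no further work.

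The main obstacle is to handle the non-trivial isotypic components. The key idea is to use Zariski density of $\Gamma$ in an essential way. If mixing failed on some $H_\sigma$ with $\sigma \neq 0$, one would extract, via a weak-* limit of the matrix coefficients, a non-trivial $a_t$-invariant vector, or more precisely an $A$-finite subspace, in $H_\sigma$; using the explicit formula \eqref{eq.BMS} for $\BMS$ and the $\Gamma$-equivariance of Patterson--Sullivan measures, this would descend to an $M$-equivariant section of a non-trivial $M$-bundle over the limit set which is moreover $\Gamma$-equivariant. The stabilizer of such a section is a proper closed subgroup of $M$, whose $\Gamma$-equivariance would force the Zariski closure of $\Gamma$ in $G$ to be a proper subgroup, contradicting Zariski density.

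The hardest step is this last reduction: the passage from a hypothetical invariant in a non-trivial $M$-type to a geometric obstruction on the limit set, and then to Zariski density, is precisely the technical content of Winter's paper. Soft representation theory alone (\emph{e.g.} Howe--Moore) is not available, because $\Gamma$ is not assumed to be a lattice, so one genuinely needs the combination of geodesic-flow mixing as an input plus the Zariski-density hypothesis to rule out non-trivial $M$-types; a purely dynamical approach via equidistribution of closed horospheres under $\BMS$, combined with the $AM$-action on the horospherical group $N$, is the route I would take to implement this reduction.
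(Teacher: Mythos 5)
First, note that the paper does not prove this statement at all: Theorem \ref{th.dale} is imported from \cite{Winter} and used as a black box, so there is no internal proof to compare yours against. Your sketch does identify the broad strategy behind Winter's theorem --- decompose $L^2(\Gamma \backslash G, \BMS)$ into $M$-isotypic components, observe that the right $A$-action preserves each component because $A$ centralizes $M$, handle the trivial component by Roblin--Babillot mixing of the geodesic flow on $\Gamma \backslash G /M$, and use Zariski density of $\Gamma$ to rule out obstructions coming from the non-trivial components --- so as an outline it is on target.

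As a proof, however, it has a genuine gap, which you acknowledge yourself: the entire content of the theorem is the step you defer (``precisely the technical content of Winter's paper''). Moreover, the mechanism you propose for that step is not correct as stated. Failure of mixing on an isotypic component $H_\sigma$ does not produce a non-trivial $a_t$-invariant vector or an $A$-finite subspace; it only produces a subsequence $t_k \to \infty$ along which the unitaries $f \mapsto f \cdot a_{t_k}$ converge weakly on $H_\sigma$ to a non-zero limit operator. (An invariant vector is what failure of \emph{ergodicity} would give, and eigenvectors come from failure of \emph{weak} mixing; failure of mixing is a strictly weaker degeneration and yields neither.) The actual argument --- Babillot's for the geodesic flow, adapted by Winter to the frame bundle --- exploits the quasi-product structure \eqref{eq.BMS} of $\BMS$ over $\partial^2 \Hr^{n+1} \times \R$ together with the local product structure of stable and unstable leaves: a non-trivial weak limit forces the $AM$-valued Brin--Pesin holonomy along stable/unstable paths to take values in a coset of a proper closed subgroup, and it is this transitivity subgroup --- not the stabilizer of a section over the limit set --- that Zariski density of $\Gamma$ forces to be all of $AM$ (via non-arithmeticity of the length spectrum and density of holonomies). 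Without carrying out that cocycle and transitivity-group analysis the argument does not close; your proposal should therefore be read as a correct statement of the reduction plus a citation of \cite{Winter} for the substance, which is exactly what the paper itself does.
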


If $g$ is any element of $G$, we let $g^+ = (gM)^+$ and $g^- = (gM)^-$ (see \emph{supra}).

\subsection{Disintegrating the Bowen-Margulis-Sullivan measure along the horospherical group}
\subsubsection{General facts}
A key point in our approach is that we look at the conditional measures of the BMS measure on $\Gamma \backslash G$ along the horospherical group $N$ (recall that we fixed an Iwasawa decomposition $G=KAN$). This is part of the general theory of conditional measures along group operations. 

In this subsection we distract from our main objective in order to recall the basic facts we will need. 

Let $X$ be  a standard Borel space  where some fixed second countable locally compact topological group $R$  acts (on the left) in a Borel way with discrete stabilizers (\emph{i.e.} for any $x \in X$ the stabilizer of $x$ is a discrete subgroup of $R$). (The reader should think of $R$ as $N$ or any $m$-plane of $N$.) 

Let $\lambda$ be some Borel probability measure on $X$.  We can disintegrate $\lambda$ along the operation of $R$ on $X$. For any $x \in X$, the ``conditional measure of $\lambda$ along $R$ at $x$'' is a projective Radon measure $\sigma(x)$.

Recall that if $\mu$ is some non-zero Radon measure on $R$, its projective class is
\[ [\mu] = \{ t \mu\ ;\ t>0 \}\]
and the space of all such projective classes is denoted by $\Mrad(R)$. The space of non-zero Radon measures on $R$ is a standard Borel space in a canonical way and we endow $\Mrad(R)$ with the quotient Borel structure, thus turning it into a standard Borel space.

Let us now describe briefly the construction of this mapping $\sigma$. For more details, we refer the reader to \cite{preprint} or \cite{thesis}.

According to a Theorem of Kechris \cite{Kechris1}, we may find a Borel subset $\Sigma$ of $X$ such that 
\begin{enumerate}
\item Any $R$-orbit in $X$ meets $\Sigma$, \emph{i.e.} $X=R\Sigma$.
\item There is a neighbourhood $U$ of the identity in $R$ such that, for any $x' \in \Sigma$, the only $g \in U$ that maps $x'$ into $\Sigma$ is the identity element.
\end{enumerate}

We call such a set a \emph{complete lacunary section}. The Borel mapping
\[ a_\Sigma : R \times \Sigma \to X \qquad (g,x') \mapsto gx' \]
has countable fibers, that is, $a_\Sigma^{-1}(x)$ is countable for any $x \in X$. We may thus define a $\sigma$-finite measure $a_\Sigma^* \lambda$ on $R \times \Sigma$ by letting
\[ \int f(g,x') \ \mathrm{d}(a_\Sigma^* \lambda)(g,x')=\int \sum_{(g,x') \in a_\Sigma^{-1}(x)} f(g,x') \ \mathrm{d}\lambda(x) \]
for any positive Borel mapping $f : R \times \Sigma \to \R$.

Choose some Borel finite measure $\lambda_\Sigma$ on $\Sigma$ such that a Borel subset $A$ of $\Sigma$ is negligible (with respect to $\lambda_\Sigma$) if and only if $R \times \Sigma$ is negligible (with respect to $a_\Sigma^*\lambda$).

We may now disintegrate $a_\Sigma^* \lambda$ above $\lambda_\Sigma$:
\[ a_\Sigma^* \lambda = \int \mathrm{d}\lambda_\Sigma(x')\ \sigma_\Sigma(x') \otimes \delta_{x'} \]
where $\delta_{x'}$ is the Dirac probability measure concentrated on $x'$ and $\sigma_\Sigma(x')$ is a Radon measure on $R$, as we may check.

\begin{proposition}[\cite{BenoistQuint} or \cite{thesis}, Proposition 2.1.1.14] \label{prop.benoistquint}
There is a mapping $\sigma : X \to \Mrad(R)$ with the following property: for any complete lacunary section $\Sigma$, there is a conegligible subset $X' \subset X$ such that for almost every $x' \in \Sigma$ and any $g \in R$, if $gx'$ belongs to $X'$ then
\[ (R_g)^{-1}\sigma(gx')=[\sigma_\Sigma(x')] \text. \]
The mapping $\sigma$ is unique up to a negligible set. Furthermore, $\sigma$ is essentially $R$-equivariant, \emph{i.e.} there is a conegligible set $X' \subset X$ such that if $x$ and $gx$ belong to $X'$, then
\begin{equation} \sigma(gx)=R_g \sigma(x) \text. \label{eq.equivariance}
\end{equation}

If $\lambda$ is finite, then $\sigma$ maps $X$ into $\Mrad(R)$, and is Borel.
\end{proposition}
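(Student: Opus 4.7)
The plan is to build $\sigma$ directly from the sectional disintegration $\sigma_{\Sigma}$ supplied just before the statement, then read off the characterizing equation, the equivariance, and uniqueness from the construction.

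First I would fix a complete lacunary section $\Sigma$ as given by Kechris's theorem and define, for $\lambda$-almost every $x \in X$, a projective Radon measure $\sigma(x)$ on $R$ as follows. Since $X = R\Sigma$, write $x = g \cdot x'$ with $x' \in \Sigma$ and $g \in R$, and set
\[ \sigma(x) = R_{g}\, [\sigma_{\Sigma}(x')]. \]
The main point to check is well-definedness: if $x = g \cdot x' = g'' \cdot x''$, we need $R_{g}[\sigma_{\Sigma}(x')] = R_{g''}[\sigma_{\Sigma}(x'')]$ as projective classes. The map $a_{\Sigma}$ has countable fibers (because $\Sigma$ is lacunary and the stabilizers are discrete), and the definition of $a_{\Sigma}^{*}\lambda$ as a sum over $a_{\Sigma}^{-1}(x)$ is symmetric in those fibers. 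Unwinding the disintegration identity above $\lambda_{\Sigma}$ on a small tubular neighbourhood of $x'$ in $\Sigma$ (where $a_{\Sigma}$ is injective on each $U \times \{x''\}$), one finds that all the Radon measures $R_{g_{i}}\sigma_{\Sigma}(x'_{i})$ coming from the various points $(g_{i},x'_{i}) \in a_{\Sigma}^{-1}(x)$ differ only by positive scalars, so they share a common projective class. This handles both the stabilizer ambiguity (take $x' = x''$) and the ambiguity caused by several points of $\Sigma$ meeting the same $R$-orbit.

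Second I would verify the three conclusions. The characterizing property $(R_{g})^{-1}\sigma(gx') = [\sigma_{\Sigma}(x')]$ is tautological from the construction. Essential $R$-equivariance is then immediate: if $x = g x'$ and $g_{1}x$ both lie in the full-measure set where the construction is defined, then
\[ \sigma(g_{1}x) = R_{g_{1}g}\,[\sigma_{\Sigma}(x')] = R_{g_{1}}\,R_{g}\,[\sigma_{\Sigma}(x')] = R_{g_{1}}\sigma(x). \]
For uniqueness, any other mapping $\tilde{\sigma}$ satisfying the characterizing equation agrees with $\sigma$ on a $\lambda_\Sigma$-conegligible subset of $\Sigma$ (both being prescribed by the disintegration of $a_\Sigma^{*}\lambda$), and then by the equivariance just established it agrees with $\sigma$ on the saturated conegligible set $R \cdot (\text{good part of } \Sigma)$, which has full $\lambda$-measure. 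In particular, the same argument shows that $\sigma$ does not depend on the choice of section.

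Finally, when $\lambda$ is finite, $\sigma_{\Sigma}(x')$ is a genuine Radon measure on $R$ for $\lambda_{\Sigma}$-almost every $x'$, and the required Borel character of $\sigma$ comes from a measurable selection: by lacunarity, $a_{\Sigma}$ is a countable-to-one Borel map, so $X$ admits a Borel partition into pieces on each of which $x \mapsto (g(x),x'(x))$ is Borel, and composition with the Borel map $(g,x') \mapsto R_{g}[\sigma_{\Sigma}(x')]$ into $\Mrad(R)$ yields a Borel map $\sigma : X \to \Mrad(R)$. The main obstacle in the whole argument is the well-definedness step in paragraph one, since this is where one has to translate the symmetric summation defining $a_{\Sigma}^{*}\lambda$ into a projective-class identity, keeping careful track of null sets in $\Sigma$ and of the discrete (possibly nontrivial) stabilizers.
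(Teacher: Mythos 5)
The paper itself does not prove this proposition: it is imported wholesale from the two references cited in its statement, and the text preceding it only sets up the objects $a_\Sigma$, $a_\Sigma^*\lambda$, $\lambda_\Sigma$ and $\sigma_\Sigma$. Your construction --- putting $\sigma(gx')=R_g[\sigma_\Sigma(x')]$ and reading off the characterizing identity, the equivariance and the uniqueness --- is exactly the construction this setup is pointing to, so you are on the intended route and your outline of the Borel-selection argument (Lusin--Novikov on the countable-to-one map $a_\Sigma$) for the last clause is fine.

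Two steps, however, are asserted rather than proved, and they happen to be the entire mathematical content of the proposition. First, the well-definedness paragraph ends with ``one finds that all the Radon measures $R_{g_i}\sigma_\Sigma(x'_i)$ differ only by positive scalars'': this is precisely the statement to be established, not a consequence of the symmetry of the sum defining $a_\Sigma^*\lambda$. The honest argument compares the disintegrations of $a_\Sigma^*\lambda$ over two flow boxes $U\times \Sigma_1$ and $U \times \Sigma_2$ meeting the same orbit piece and invokes essential uniqueness of disintegration; that uniqueness only holds off a $\lambda_\Sigma$-null set of fibers, so since an orbit may meet $\Sigma$ in countably many points you must check that the exceptional null sets can be discarded once and for all (the countability of the fibers of $a_\Sigma$ is what makes this possible), and that the resulting positive scalars satisfy the cocycle compatibility needed for the projective classes to glue along a whole orbit. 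Second, the proposition quantifies over \emph{all} complete lacunary sections, whereas your $\sigma$ is built from one of them; your uniqueness clause only compares two maps both already known to satisfy the characterizing identity for the \emph{same} $\Sigma$, so ``the same argument shows that $\sigma$ does not depend on the choice of section'' does not follow --- you must run the flow-box comparison between two distinct sections (or pass through a common refinement). Neither point is a wrong idea, and both are carried out in the cited references, but as written your proof assumes its key claims at exactly the places where the work lies.
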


By an abuse of language, we will speak of the ``measure'' $\sigma(x)$, despite the fact that this is not a genuine measure.

It is easy to check that for $\lambda$-almost every $x$, the identity element $e$ of $R$ belongs to the support of $\sigma(x)$.

Also, a subset $A \subset X$ is negligible with respect to $\lambda$ if and only if for $\lambda$-almost every $x \in X$, the set of all $g \in R$ such that $gx \in A$ is negligible with respect to $\sigma(x)$. 

We are now going to state an important transitivity property. Let $L$ be some closed  subgroup of $R$. Obviously, $L$ acts on $X$ in a Borel way with discrete stabilizers, so that we may disintegrate $\lambda$ along $L$, this gives another Borel mapping
\[ \sigma_{\lambda,L} : X \to \Mrad(L) \text. \]
Now, for $\lambda$-almost every $x$, we disintegrate also the (projective) Radon measure $\sigma(x)$ along the natural operation of $L$ on $R$ (on the left). Note that $\sigma(x)$ is (usually) not a finite (projective) measure, but there is no difficulty in generalizing the above proposition to the setting of a measure that is $\sigma$-finite instead of being finite, see \cite{thesis}. For $\lambda$-almost every $x$, let 
\[ \sigma_{\sigma(x),L} : R \to \Mrad(L) \]
be the Borel mapping obtained by disintegrating $\sigma(x)$ along $L$.

\begin{proposition}[\cite{thesis}, proposition 2.1.3.5] \label{prop.transitivity}
For $\lambda$-almost every $x$ and $\sigma(x)$-almost every $g \in R$, 
\[ \sigma_{\lambda,L}(gx)=\sigma_{\sigma(x),L}(g) \text. \]
\end{proposition}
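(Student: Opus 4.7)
The plan is to prove this as a tower law for disintegrations, by picking complete lacunary sections for the $R$-action and the $L$-action on $X$ that are compatible, in the sense that one is obtained from the other via a section for the left action of $L$ on $R$. The whole argument is essentially bookkeeping around Proposition \ref{prop.benoistquint} and the uniqueness of disintegrations.

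First I would invoke Kechris's theorem to pick a complete lacunary Borel section $\Sigma_R \subset X$ for the $R$-action, and (applying the same theorem to $L$ acting by left translation on $R$) a Borel section $T \subset R$ for the left $L$-cosets in $R$, which is itself lacunary: $LT = R$ and there is a neighbourhood $V$ of $e$ in $L$ such that the only $h \in V$ with $ht \in T$ is $h=e$, for each $t \in T$. Then $\Sigma_L := T \cdot \Sigma_R$ is a complete lacunary Borel section for the $L$-action on $X$: completeness follows from $LT\Sigma_R = R\Sigma_R = X$, and lacunarity from the lacunarity of both $T$ and $\Sigma_R$. The factorisation map $(h,t,x') \mapsto h t x'$ from $L \times T \times \Sigma_R$ to $X$ has countable fibres, and intertwines the projection $(h,t,x') \mapsto (ht,x')$ with the countable-to-one map $a_{\Sigma_R} \colon R \times \Sigma_R \to X$, and the projection $(h,t,x') \mapsto (h, t x')$ with $a_{\Sigma_L} \colon L \times \Sigma_L \to X$.

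Next I would exploit this factorisation at the level of the lifted measures. Choosing $\lambda_{\Sigma_R}$ as in the construction preceding Proposition \ref{prop.benoistquint}, one has the disintegration
\[ a_{\Sigma_R}^* \lambda \;=\; \int \mathrm{d}\lambda_{\Sigma_R}(x')\ \sigma_{\Sigma_R}(x') \otimes \delta_{x'}. \]
I would then disintegrate each Radon measure $\sigma_{\Sigma_R}(x')$ on $R$ along the left action of $L$, using the section $T \subset R$; this gives a representative of $\sigma_{\sigma(x'),L}(g)$ on $L$, together with a finite quotient measure $\nu_{x'}$ on $T$. Pulling everything back through the identification $R \leftrightarrow L \times T$ and through $(h,t,x') \mapsto (h, t x')$ yields
\[ a_{\Sigma_L}^* \lambda \;=\; \int \mathrm{d}\lambda_{\Sigma_R}(x') \int \mathrm{d}\nu_{x'}(t)\ \sigma_{\sigma_{\Sigma_R}(x'),L}(t) \otimes \delta_{t x'}, \]
which is a disintegration of $a_{\Sigma_L}^* \lambda$ over the measure $\lambda_{\Sigma_L}$ on $\Sigma_L$ defined by $\int f\, \mathrm{d}\lambda_{\Sigma_L} = \int \mathrm{d}\lambda_{\Sigma_R}(x') \int f(tx')\, \mathrm{d}\nu_{x'}(t)$.

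Then by uniqueness of disintegration applied to $a_{\Sigma_L}^* \lambda$ over $\lambda_{\Sigma_L}$, and comparing with the definition of $\sigma_{\lambda,L}$ from Proposition \ref{prop.benoistquint} applied to the section $\Sigma_L$, I would obtain that for $\lambda_{\Sigma_R}$-almost every $x'$ and $\nu_{x'}$-almost every $t$, the projective class $[\sigma_{\sigma_{\Sigma_R}(x'),L}(t)]$ equals $[\sigma_{\Sigma_L}(tx')]$, hence equals $\sigma_{\lambda,L}(tx')$. Finally I would propagate this equality from the base points $tx' \in \Sigma_L$ to all $gx \in X$ by using the essential $R$-equivariance of $\sigma$ (equation \eqref{eq.equivariance}) and the corresponding $L$-equivariance of $\sigma_{\lambda,L}$, combined with the translation-invariance built into the definition of $\sigma_{\sigma(x),L}$. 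The main obstacle I anticipate is not conceptual but technical: carefully checking that the various projective-class ambiguities in $\sigma(x)$, $\sigma_{\sigma(x),L}$ and the factor measures $\nu_{x'}$ are consistent, so that the equalities above hold as genuine equalities of projective Radon measures. This verification amounts to unpacking the $\sigma$-finite version of Proposition \ref{prop.benoistquint} developed in \cite{thesis}.
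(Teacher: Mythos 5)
The strategy you outline---a tower law for disintegrations via compatible cross-sections, followed by uniqueness and then propagation by equivariance---is the natural one, and the paper itself gives no proof to compare against (it defers entirely to \cite{thesis}, Proposition 2.1.3.5). But there is a genuine gap at the foundational step: the claim that $\Sigma_L := T\cdot\Sigma_R$ is a \emph{lacunary} section for the $L$-action does not follow from the lacunarity of $T$ in $R$ and of $\Sigma_R$ in $X$, and it is false in general. Lacunarity of $\Sigma_R$ gives uniform discreteness, inside $R$, of the countable return set $D=\{g\in R\ ;\ gx'\in\Sigma_R\}$, but it gives no separation of these return times in the $L$-direction. Concretely, take $X=R=\R^2$ acting on itself by translation, $L=\R\times\{0\}$, $T=\{0\}\times\R$, and $\Sigma_R=\{(0,0)\}\cup\{(1/n,\,n)\ ;\ n\ge 1\}$. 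All pairwise differences of points of $\Sigma_R$ have norm at least $1$, so $\Sigma_R$ is a complete lacunary section; yet $T\cdot\Sigma_R=(\{0\}\cup\{1/n\ ;\ n\ge 1\})\times\R$ contains pairs of points differing by the arbitrarily small elements $(1/n,0)$ of $L$, so no neighbourhood of the identity in $L$ witnesses lacunarity. Since Proposition \ref{prop.benoistquint} is only available for complete \emph{lacunary} sections, you cannot read $\sigma_{\lambda,L}$ off from $\Sigma_L$ as written, and the identification of $[\sigma_{\Sigma_L}(tx')]$ with $\sigma_{\lambda,L}(tx')$---the pivot of your argument---is unjustified.

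The remainder of the plan (factoring $a_{\Sigma_R}^*\lambda$ through $L\times T\times\Sigma_R$, disintegrating each $\sigma_{\Sigma_R}(x')$ along $L$, matching the two disintegrations by uniqueness, and transporting the conclusion to all of $X$ via the equivariance \eqref{eq.equivariance}) is sound in spirit, so the gap is localized but load-bearing. To repair it you must either (i) replace $\Sigma_R$ and $T$ by countably many Borel pieces on which the $L$-components of the return sets are uniformly separated---which requires an actual argument, not just Kechris's theorem---or (ii) avoid constructing a global section for $L$ altogether and work upstairs on $R\times\Sigma_R$, where $L$ acts on the first factor only: $a_{\Sigma_R}$ is a countable-to-one $L$-equivariant Borel map, the conditional measures of $a_{\Sigma_R}^*\lambda$ along $L$ at $(g,x')$ are those of $\sigma_{\Sigma_R}(x')$ at $g$ by a Fubini-type argument, and one then checks directly that the resulting assignment satisfies the characterizing property of $\sigma_{\lambda,L}$ in Proposition \ref{prop.benoistquint}. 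Route (ii) is what the $\sigma$-finite machinery of \cite{thesis} is built for, and is presumably how the cited Proposition 2.1.3.5 is actually proved.
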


Finally, the following lemma, although obvious, is important.
\begin{lemma} \label{lemma.isom}
Let $Y$ be another standard Borel space where $R$ acts on the left in a Borel way with discrete stabilizers. Let $\alpha$ be some Borel group automorphism of $R$. Let $\phi : X \to Y$ be some Borel automorphism that is a $\alpha$-homomorphism of $R$-spaces, which means that
\[ \phi(rx)=\alpha(r) (\phi(x)) \]
for any $x \in X$ and $r \in R$.  Denote by $\nu$ the push-forward $\phi \lambda$ of $\lambda$ through $\phi$. We disintegrate $\lambda$ and $\nu$ along $R$, this yields two mappings 
\[ \sigma_\lambda : X \to \Mrad(R),\qquad \sigma_\nu : Y \to \Mrad(R)\text{.}\]
Then for $\lambda$-almost every $x \in X$, there holds
\[ \sigma_\nu(\phi(x))=\alpha (\sigma_\lambda(x)) \text. \]
\end{lemma}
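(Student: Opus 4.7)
The plan is to unpack the construction of $\sigma_\lambda$ and $\sigma_\nu$ via complete lacunary sections and to transport everything across $\phi$. Fix a complete lacunary section $\Sigma \subset X$ for the $R$-action; I first claim that $\phi(\Sigma)$ is a complete lacunary section for the $R$-action on $Y$. Every $R$-orbit meets $\phi(\Sigma)$ because, writing $y=\phi(x)=\phi(gx')$ with $x'\in\Sigma$, we get $y=\alpha(g)\phi(x')\in R\phi(\Sigma)$. For the lacunarity condition, if $U$ is the neighbourhood of $e$ witnessing that $\Sigma$ is lacunary, choose a neighbourhood $V$ of $e$ with $\alpha^{-1}(V)\subset U$ (here I use that $\alpha$, being a Borel automorphism of the second countable locally compact group $R$, is automatically continuous); then for $y'=\phi(x')\in\phi(\Sigma)$ and $g\in V$, the relation $gy'\in\phi(\Sigma)$ reads $\phi(\alpha^{-1}(g)x')\in\phi(\Sigma)$, whence $\alpha^{-1}(g)x'\in\Sigma$ and thus $\alpha^{-1}(g)=e$, i.e. $g=e$.

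Next I transport the $\sigma$-finite ``pulled-back'' measure. The bijection
\[ \Phi : R\times\Sigma \longrightarrow R\times\phi(\Sigma)\text{,}\qquad (g,x') \longmapsto (\alpha(g),\phi(x')) \]
satisfies $a_{\phi(\Sigma)}\circ\Phi = \phi\circ a_\Sigma$ by the $\alpha$-homomorphism property of $\phi$, and it establishes a bijection between the fibre $a_\Sigma^{-1}(x)$ and the fibre $a_{\phi(\Sigma)}^{-1}(\phi(x))$ for every $x\in X$. Reading the definitions of $a_\Sigma^*\lambda$ and $a_{\phi(\Sigma)}^*\nu$ together with $\nu=\phi\lambda$, one obtains
\[ \Phi_*\bigl(a_\Sigma^*\lambda\bigr) \;=\; a_{\phi(\Sigma)}^*\nu\text{.} \]
Take $\lambda_\Sigma$ as a base measure on $\Sigma$ adapted to $a_\Sigma^*\lambda$; then $\phi\lambda_\Sigma$ is a base measure on $\phi(\Sigma)$ adapted to $a_{\phi(\Sigma)}^*\nu$. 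Since $\Phi_*(\sigma_\Sigma(x')\otimes\delta_{x'}) = \alpha(\sigma_\Sigma(x'))\otimes\delta_{\phi(x')}$, uniqueness of disintegration gives
\[ \sigma_{\phi(\Sigma)}(\phi(x')) \;=\; \alpha\bigl(\sigma_\Sigma(x')\bigr) \]
for $\lambda_\Sigma$-almost every $x'\in\Sigma$.

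Finally I transfer this equality from the sections $\Sigma$, $\phi(\Sigma)$ to $X$, $Y$. By Proposition \ref{prop.benoistquint} applied on each side, $\sigma_\lambda(x') = [\sigma_\Sigma(x')]$ and $\sigma_\nu(\phi(x')) = [\sigma_{\phi(\Sigma)}(\phi(x'))]$ almost everywhere, so the desired identity $\sigma_\nu(\phi(x')) = \alpha(\sigma_\lambda(x'))$ holds for almost every $x'\in\Sigma$. A generic $x\in X$ is of the form $gx'$ with $x'\in\Sigma$; the essential $R$-equivariance of $\sigma_\lambda$ and $\sigma_\nu$ (equation \eqref{eq.equivariance}) combined with the elementary identity $R_{\alpha(g)}\circ\alpha = \alpha\circ R_g$ (both sides push forward a measure $\mu$ on $R$ via $h\mapsto\alpha(h)\alpha(g)^{-1}$) then yields $\sigma_\nu(\phi(x)) = R_{\alpha(g)}\sigma_\nu(\phi(x')) = R_{\alpha(g)}\alpha(\sigma_\lambda(x')) = \alpha(R_g\sigma_\lambda(x')) = \alpha(\sigma_\lambda(x))$. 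The only mildly delicate point is the lacunarity verification for $\phi(\Sigma)$, which is handled by continuity of $\alpha$; the rest is a direct bookkeeping of pushforwards and disintegrations.
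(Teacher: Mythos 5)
Your proof is correct. Note that the paper itself offers no argument here: it declares the lemma ``obvious'' and moves on, so there is no official proof to compare against; what you have written is the natural verification that the paper leaves implicit, carried out through the same machinery (complete lacunary sections, the fibred measure $a_\Sigma^*\lambda$, and Proposition \ref{prop.benoistquint}) that the paper uses to construct $\sigma$ in the first place. All the steps check out: $\phi(\Sigma)$ is indeed a complete lacunary section (your appeal to automatic continuity of the Borel automorphism $\alpha$ of the Polish group $R$ is the right way to secure the lacunarity neighbourhood, and is worth stating explicitly as you do); the intertwining $a_{\phi(\Sigma)}\circ\Phi=\phi\circ a_\Sigma$ together with the fibrewise bijection gives $\Phi_*(a_\Sigma^*\lambda)=a_{\phi(\Sigma)}^*\nu$; and uniqueness of the disintegration over the transported base measure $\phi\lambda_\Sigma$ yields $\sigma_{\phi(\Sigma)}(\phi(x'))=\alpha(\sigma_\Sigma(x'))$. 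The final transfer from $\Sigma$ to all of $X$ via $R_{\alpha(g)}\circ\alpha=\alpha\circ R_g$ is also right; the only point you pass over silently is that a conegligible subset of $\Sigma$ (for $\lambda_\Sigma$) has conegligible $R$-saturation in $X$ (for $\lambda$), which is exactly the defining compatibility of $\lambda_\Sigma$ with $a_\Sigma^*\lambda$ and so costs nothing. In short: a complete and correct proof of a statement the paper asserts without proof.
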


\subsubsection{Dimension and transverse dimension along a given subgroup}

Now we specialize to the setting where $X=\Gamma \backslash G$, $R=N$ (recall that we fixed an Iwasawa decomposition $G=KAN$) and $\lambda$ is the BMS measure on $\Gamma \backslash G$, $\BMS$. Fix some $m$-plane $U$ in $N$,  $1 \leq m \leq n-1$, and let $\sigma_N$ and $\sigma_U$  be the Borel mappings from $\Gamma \backslash G$ to $\Mrad(N)$ and $\Mrad(U)$, obtained by disintegrating $\BMS$ along $N$ and $U$, respectively.

\begin{lemma} \label{lemma.biglemma}
\begin{enumerate}
\item \emph{Transitivity of disintegration:} For $\BMS$-almost every $x$, if we disintegrate (in the usual sense) $\sigma_N(x)$ along the quotient mapping $\pi : N \to N/U$, yielding
\[ \sigma_N(x) = \int_{N/U} \sigma_{N,\pi g}(x)\ \mathrm{d}(\theta(x))(\pi g) \]
then for $\sigma_N(x)$-almost every $g \in N$, the conditional measure $\sigma_{N,\pi g}(x)$ of $\sigma_N(x)$ above $\pi(g)$ is the push-forward of $\sigma_U(xg)$  through the translation $L_g : U \to \pi^{-1}(g)$.

\item For $\BMS$-almost every $x = \Gamma g$ in $\Gamma \backslash G$ the push-forward of $\sigma_N(x)$ through the homeomorphism
\[ \begin{array}{rcl} N &\to& \partial \Hr^{n+1} \setminus \{g^-\}\\ h & \mapsto &(gh)^+
\end{array} \]
is equivalent to the Patterson-Sullivan measure (restricted to the complement of $g^-$), and the Radon-Nikodym derivative is a continuous mapping
\[ \partial \Hr^{n+1} \setminus \{g^- \} \to \ ]0,\infty[ \text. \]
\item For $\BMS$-almost every $x$, $\sigma_N(x)$ and $\sigma_U(x)$ are exact dimensional; their dimensions are almost surely independent of $x$. We denote the (almost sure) dimension of $\sigma_N(x)$ and $\sigma_U(x)$ by $\Dim(\BMS,N)$ and $\Dim(\BMS,U)$, respectively.
\item For any integer $m$, $1 \leq m \leq n-1$, and any $m$-plane $U$ in $N$, there is a well-defined ``transversal (lower) dimension''
\[  \DimT(\BMS,N/U) \in [0,n-m] \]
such that, for $\BMS$-almost every $x \in \Gamma \backslash G$, if we restrict $\sigma_N(x)$ to some compact neighbourhood $B$ of the identity element of $N$, and look at the projection $\pi (\sigma_N(x)|B)$, then, for $\sigma_N(x)$-almost every $g \in B$, 
\[ \diminf(\pi (\sigma_N(x)|B), \pi(g)) = \DimT(\BMS,N/U) \text. \]
\item If $\Dim(\BMS,U)$ is zero, then $\sigma_N(x)$ is almost surely concentrated on a Borel graph above $N/U$, \emph{i.e.} for $\BMS$-almost every $x \in \Gamma \backslash G$ there is a Borel section for $N/U$ that has full $\sigma_N(x)$-measure; in other words, there is a Borel subset $E \subset N$ of full measure (with respect to $\sigma_N(x)$) such that $\pi$ restricts to a bijection $\pi|E : E \to N/U$. \label{biglemma.graph}
\item The following Ledrappier-Young formula holds:
\[ \delta_\Gamma = \Dim(\BMS,U) +\DimT(\BMS,N/U) \text. \]
\end{enumerate}

\end{lemma}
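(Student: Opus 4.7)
The proof proceeds item by item, building on the transitivity of disintegration (Proposition \ref{prop.transitivity}), the equivariance lemma (Lemma \ref{lemma.isom}), the expression \eqref{eq.BMS} of $\BMS$ in Hopf coordinates, and mixing of the $A$-action (Theorem \ref{th.dale}).

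\textbf{Parts 1 and 2.} Part 1 is an immediate application of Proposition \ref{prop.transitivity} with $R=N$ and $L=U$: disintegrating $\sigma_N(x)$ along the left action of $U$ on $N$ yields projective conditionals on each coset $gU$ which, after the canonical identification of $gU$ with $U$ via $L_g$, are precisely $\sigma_U(xg)$. For part 2 I would unwind the formula \eqref{eq.BMS}. Fixing the negative endpoint $g^-$ and the Busemann parameter $s$ (which amounts to restricting to the strong unstable horosphere through $g$) and letting $h$ range in $N$, the boundary parameterization $h\mapsto(gh)^+$ picks out a measure equal to $\mu_x$ restricted to $\partial\Hr^{n+1}\setminus\{g^-\}$, weighted by the continuous factor $e^{\delta_\Gamma b_\eta(x,gh)}$ arising from the $\eta$-conformal density and the change of variable.

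\textbf{Parts 3 and 4.} Exact-dimensionality of $\sigma_N(x)$, and the identification of its dimension with $\delta_\Gamma$, reduces via part 2 to the classical fact that the Patterson-Sullivan measure is exact-dimensional of dimension $\delta_\Gamma$ (Sullivan, Ledrappier). For $\sigma_U(x)$ I would use Lemma \ref{lemma.isom} applied to the frame flow $\phi_t(x)=xa_t$ with the automorphism $\alpha_t(u)=a_{-t}ua_t$, which is a uniform homothety of $U\simeq\R^m$ of ratio $e^{-t}$; this gives the projective covariance $\sigma_U(xa_t)=(\alpha_t)_\ast\sigma_U(x)$, so that $\sigma_U(x)(B(e,e^{-t}))$ matches $\sigma_U(xa_{-t})(B(e,1))$ up to normalization. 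Ergodicity of $(\phi_t)$ on $\BMS$ (Theorem \ref{th.dale}) together with the standard Ledrappier-Young scheme in this uniformly conformal setting then yields almost-sure existence and a.s. constancy of the pointwise dimension, defining $\Dim(\BMS,U)$. Part 4 is the same argument applied to $\pi\sigma_N(x)$: since $a_t$ also scales $N/U$ uniformly by $e^{t}$, the local dimension of $\pi(\sigma_N(x)|B)$ at $\pi(g)$ is a.s. constant, which I take as the definition of $\DimT(\BMS,N/U)$.

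\textbf{Parts 5 and 6.} For part 6 I would combine part 1 with the uniform conformality of the $A$-action: part 1 presents $\sigma_N(x)$ as a skew product over $\pi\sigma_N(x)$ with fibers exact-dimensional of dimension $\Dim(\BMS,U)$, and because $a_t$ scales $N$ and $N/U$ by the same factor, the ball $B(e,r)$ in $N$ is a product of balls of radius $r$ in $U$ and $N/U$; taking logarithms and dividing by $\log r$ makes the three exact dimensions add, giving the Ledrappier-Young identity after recalling that $\Dim(\BMS,N)=\delta_\Gamma$. For part 5, if $\Dim(\BMS,U)=0$, I would argue that the scaling covariance of $\sigma_U$ forces $\sigma_U(x)$ to be projectively Dirac at the identity almost everywhere: ergodicity and the $(\alpha_t)$-equivariance ensure that any distributional limit of the zoom-ins $\sigma_U(xa_t)^\square$ is an $(S_t^\square)$-invariant fractal distribution on $\mathcal{M}^\square(U)$ of mean dimension zero, and the only such distribution is the Dirac at $[\delta_e]$. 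Once $\sigma_U(x)=[\delta_e]$ for a.e. $x$, part 1 implies that the support of $\sigma_N(x)$ meets each $U$-fiber in a single point, hence is concentrated on a Borel graph over $N/U$.

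\textbf{Main obstacle.} The most delicate step is establishing exact-dimensionality of $\sigma_U(x)$ and of the projection $\pi\sigma_N(x)$ in parts 3 and 4: unlike $\sigma_N(x)$ itself, neither admits a direct geometric realization on the boundary, so one must genuinely invoke the Ledrappier-Young machinery via the scaling covariance coming from Lemma \ref{lemma.isom} and ergodicity of the $A$-flow on $\BMS$. The same circle of ideas is what makes the upgrade ``exact dimension zero $\Rightarrow$ projective Dirac'' go through in part 5, since without the uniform conformality of $A$ the implication would fail outright.
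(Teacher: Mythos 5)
Your overall strategy --- Proposition \ref{prop.transitivity} for item 1, unwinding \eqref{eq.BMS} for item 2, and the scaling covariance $\sigma_U(xa_t)=\mathrm{Int}(a_{-t})\,\sigma_U(x)$ from Lemma \ref{lemma.isom} combined with ergodicity of the frame flow (Theorem \ref{th.dale}) for items 3--6 --- is the one the paper intends: the paper disposes of item 1 as a restatement of Proposition \ref{prop.transitivity}, calls item 2 straightforward, and defers items 3--6 to the references, which carry out exactly this Ledrappier--Young scheme. Items 1 and 2 of your proposal are fine.

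There is, however, a genuine gap in your item 5. You deduce $\sigma_U(x)=[\delta_e]$ from $\Dim(\BMS,U)=0$ by arguing that any distributional limit of the zoom-ins is a scale-invariant distribution of dimension zero, hence the Dirac mass at $[\delta_e]$. Two problems. First, even if every scenery limit of $\sigma_U(x)$ at $e$ were $\delta_{[\delta_e]}$, that only constrains the asymptotic small-scale structure and does not force $\sigma_U(x)$ itself to be a point mass: a purely atomic measure, or a measure carried by a dimension-zero Cantor set, has trivial tangent distributions without being trivial. (What could save this is that the distribution $\int\delta_{\sigma_U(x)^*}\,\mathrm{d}\BMS(x)$ is \emph{exactly} $S_t$-invariant, not merely a limit of sceneries.) Second, the assertion that the only such invariant quasi-Palm distribution of dimension zero is $\delta_{[\delta_e]}$ is precisely the nontrivial dichotomy ``trivial conditionals or positive dimension'', and you have asserted it rather than proved it. The argument in the cited thesis (following Einsiedler--Katok--Lindenstrauss and Benoist--Quint) runs the other way: if $\sigma_U(x)\neq[\delta_e]$ on a set of positive measure, that set is $A$-invariant hence conull; Poincar\'e recurrence then supplies, for a.e.\ $x$ and a positive-density set of times $t$, a support point of $\sigma_U(xa_t)$ at distance comparable to $1$ from $e$, and the equivariance \eqref{eq.equivariance} converts this into a definite multiplicative gap $\sigma_U(x)(B(e,e^{-t}))\leq C e^{-\epsilon t}\sigma_U(x)(B(e,1))$, i.e.\ $\Dim(\BMS,U)>0$. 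A similar remark applies to your item 6: additivity of dimension over a disintegration fails in general, so the identity $\delta_\Gamma=\Dim(\BMS,U)+\DimT(\BMS,N/U)$ is not a formal consequence of the fact that a ball of radius $r$ in $N$ is a product of balls of radius $r$ in $U$ and in $N/U$; the uniformity over fibers needed to ``take logarithms and add'' is exactly what the entropy argument of Ledrappier--Young provides. As written, items 5 and 6 name the right theorems but do not contain their content.
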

\begin{proof}
Statement 1. is  just a restatement of proposition \ref{prop.transitivity}. Statement 2 is straightforward. For statements 3 to 6 see \cite{preprint} or \cite{thesis}.
\end{proof}
\begin{remark}
This lemma relies on the crucial fact that $\BMS$ is ergodic with respect to any non trivial $a \in A$ (it is, indeed, strongly mixing with respect to the frame flow, see theorem \ref{th.dale}).
\end{remark}

\subsubsection{Recurrence and dissipativity}\label{ss.recurrence}
Let $X$ be a standard Borel space endowed with a Borel $\sigma$-finite measure $\lambda$. Let $H$ be a locally compact second countable topological group acting in a Borel way on $X$, with discrete stabilizers. 

We do \emph{not} assume that $\lambda$ is invariant under $H$.

\begin{definition}
A Borel subset $W \subset X$ is a \emph{wandering set} if for $\lambda$-almost every $x \in X$, the set
\[ \{ g \in H\ ;\ gx \in W\} \]
is relatively compact.

A Borel subset $A \subset X$ is a \emph{recurrent set} if for any Borel subset $B \subset A$ and $\lambda$-almost every $x \in B$, the set
\[ \{ g \in H\ ;\ gx \in B \} \]
is not relatively compact.

We say that $\lambda$ is totally recurrent with respect to $H$ if $X$ is a recurrent set. We say that $\lambda$ is totally dissipative with respect to $H$ if $X$ is a countable union of wandering sets.
\end{definition}

It follows immediately that $A$ is a recurrent set if and only if $A \cap W$ is negliglible for every wandering set $W$.

It is possible to generalize the classical Hopf decomposition to this setting (where the measure is not even quasi-invariant), see \cite{thesis} section 2.1.2.

\begin{proposition}[\cite{thesis}, Th\'{e}or\`{e}me 2.1.2.6]
Assume that $\lambda$ is finite. Let $\sigma : X \to \Mrad(H)$ be the mapping obtained by disintegrating $\lambda$ along $H$. 
\begin{itemize}
\item If, for $\lambda$-almost every $x$, $\sigma(x)$ is \emph{finite}, then $\lambda$ is totally dissipative (with respect to $H$).
\item If, for $\lambda$-almost every $x$, $\sigma(x)$ is \emph{infinite}, then $\lambda$ is totally recurrent (with respect to $H$).
\end{itemize}
\end{proposition}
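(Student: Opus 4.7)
The plan is to anchor the proof on a complete lacunary section $\Sigma$ and the disintegration formula
\[ a_\Sigma^{*}\lambda = \int_\Sigma \sigma_\Sigma(x')\otimes \delta_{x'}\,\mathrm{d}\lambda_\Sigma(x'), \]
together with the Benoist--Quint identification $\sigma(gx')=[R_g\sigma_\Sigma(x')]$: this lets me replace the projective conditional $\sigma(x)$ by its honest Radon representative $\sigma_\Sigma(x')$ at $x'\in\Sigma$, so finiteness versus infiniteness of $\sigma(x)$ matches that of $\sigma_\Sigma(x')$.

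For the dissipative direction, assume $\sigma_\Sigma(x')$ is finite $\lambda_\Sigma$-a.e. Fix a compact exhaustion $(K_k)_k$ of $H$ and let
\[ \Sigma_{n,k} = \bigl\{ x'\in \Sigma : \sigma_\Sigma(x')(H)\le n,\ \sigma_\Sigma(x')(H\setminus K_k)\le 1/n \bigr\}\text. \]
By tightness of finite Radon measures these cover $\Sigma$ modulo $\lambda_\Sigma$-null. I would set $W_{n,k} = a_\Sigma(K_k\times\Sigma_{n,k})$, chopped into countably many Borel pieces on which $a_\Sigma$ is injective. For $x=g_0 x'$ with $x'\in\Sigma$, the return set unwinds as
\[ \{h\in H:hx\in W_{n,k}\} = K_k\cdot E(x')\cdot g_0^{-1},\qquad E(x')=\{s\in D(x'):sx'\in\Sigma_{n,k}\}, \]
where $D(x')$ is the discrete lacunary set of returns of $x'$ to $\Sigma$. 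Finiteness of $|E(x')|$ for $\lambda$-a.e.\ $x$ comes by integrating against $\lambda$, using the push-forward identity $(a_\Sigma)_{*}a_\Sigma^{*}\lambda = (\#D(\cdot))\cdot \lambda$ and the mass bound, to get
\[ \int_X |E(x')|\,\mathrm{d}\lambda(x) \le n\,\lambda_\Sigma(\Sigma_{n,k}) < \infty\text. \]
Hence the return set is relatively compact for $\lambda$-a.e.\ $x$, each $W_{n,k}$ is wandering, and $\lambda$ is totally dissipative.

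For the recurrent direction, assume $\sigma_\Sigma(x')$ is infinite $\lambda_\Sigma$-a.e. I would invoke the generalized Hopf decomposition of section~2.1.2 of the thesis, which splits $X$ uniquely modulo null into a recurrent part $C$ and a countable union $D$ of wandering sets, reducing the task to showing $\lambda(D)=0$. Suppose a wandering $W$ had $\lambda(W)>0$: for $\lambda$-a.e.\ $x$ the set $\{h:hx\in W\}$ is relatively compact, which transports to the $\Sigma$-side as the assertion that the fibers $\tilde W_{x'}=\{g:gx'\in W\}$ are relatively compact for $\lambda_\Sigma$-a.e.\ $x'$ on which they meet the support of $\sigma_\Sigma(x')$. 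A Fubini-style argument then exhibits $\sigma_\Sigma(x')$ as a countable sum of finite Radon pieces supported on translates of the $\tilde W_{x'}$, forcing $\sigma_\Sigma(x')(H)<\infty$ on a positive-measure set of $x'$, a contradiction.

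The main obstacle is this last step in the recurrent direction. Because $\lambda$ is not assumed $H$-quasi-invariant, I cannot freely compare $\lambda(W)$ with $\lambda(hW)$; the argument has to be run at the level of the $H$-equivariant projective conditionals $\sigma$, exploiting their simple transformation under $R_h$ and converting back to $\lambda$-mass only through the disintegration formula, with careful bookkeeping of the multiplicity contributions inherent to the lacunary (not fundamental) section $\Sigma$.
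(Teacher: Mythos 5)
The paper does not actually prove this proposition --- it is quoted from the thesis --- so I can only assess your argument on its own merits. Your dissipative direction is sound and is surely close to the intended argument: the counting identity
\[ \int_X \#\{g\ ;\ g^{-1}x \in \Sigma_{n,k}\}\,\mathrm{d}\lambda(x) \;=\; \int_{\Sigma_{n,k}} \sigma_\Sigma(x')(H)\,\mathrm{d}\lambda_\Sigma(x') \;\leq\; n\,\lambda_\Sigma(\Sigma_{n,k}) \]
is exactly the right way to convert the mass bound on the conditionals into finiteness of the return sets, and Lusin--Novikov takes care of Borelness of $W_{n,k}=a_\Sigma(K_k\times\Sigma_{n,k})$. (Two small points: the tightness condition $\sigma_\Sigma(x')(H\setminus K_k)\leq 1/n$ is never used and destroys the monotonicity you implicitly need when checking that the $W_{n,k}$ cover $X$ modulo null --- just take $\Sigma_n=\{x'\ ;\ \sigma_\Sigma(x')(H)\leq n\}$ and $W_{n,k}=K_k\Sigma_n$; and the injectivity chopping is irrelevant to the wandering property.)

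The recurrent direction, however, has a genuine gap exactly where you flag "the main obstacle", and the sketched resolution does not close it. Knowing that a single wandering set $W$ has $\lambda(W)>0$ tells you that $\widetilde{W}_{x'}=\{g\ ;\ gx'\in W\}$ is relatively compact and that $\sigma_\Sigma(x')(\widetilde{W}_{x'})>0$ on a positive-measure set of $x'$; but this controls $\sigma_\Sigma(x')$ \emph{only on} $\widetilde{W}_{x'}$. There is no reason why $\sigma_\Sigma(x')$ should be carried by countably many translates of $\widetilde{W}_{x'}$: off the return set the orbit sits in $X\setminus W$, where $\lambda$ can perfectly well be conservative with infinite conditionals. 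Concretely, for an irrational rotation $T$ of the circle with $\lambda=\mathrm{Leb}+\delta_{y_0}$, the singleton $\{y_0\}$ is a wandering set of positive $\lambda$-measure while $\sigma(x)$ is the (infinite) counting measure at Lebesgue-almost every $x$; no contradiction of the kind you describe can be extracted from the relative compactness of the return sets alone. What actually saves the proposition is that $\sigma(y_0)=[\delta_e]$ is \emph{finite}, i.e.\ the hypothesis "$\sigma(x)$ infinite for almost every $x$" must be contradicted \emph{at the points of $W$ themselves}. Proving that the conditional measure is a.e.\ finite \emph{on} a positive-measure wandering set requires showing that the base measure of $\lambda|_W$ on the section is singular to that of the conservative part --- equivalently, one passes to the $H$-invariant dissipative part $D=\bigcup_n W_n$ of the Hopf decomposition (note $KW$ is wandering for compact $K$, so $D$ may be taken invariant, and conditionals localize to invariant sets) and proves the converse of your first bullet: totally dissipative implies a.e.\ finite conditionals. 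That converse is itself not automatic, since $\sigma(x)(H)\leq\sum_n\sigma(x)(R_{W_n}(x))$ is a countable sum of finite terms; one needs the Borel-graph/selector structure on the dissipative part (as in Lemma \ref{lemma.dissip}) rather than a direct Fubini computation. As written, the second half of your proof asserts the conclusion of this missing argument rather than supplying it.
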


The following lemma relates the dynamics of the BMS measure (with respect to some $m$-plane $U$ in $N$) to its dimension (along $U$).

\begin{lemma}[\cite{thesis}, corollary 2.2.1.9] \label{lemma.dissip}
  Let $U$ be an $m$-plane of $N$, $1 \leq m \leq n-1$. The following assertions are equivalent:
  \begin{enumerate}
  \item $\Dim(\BMS,U)=0$;
  \item $\BMS$ is totally dissipative with respect to $U$;
  \item For $\BMS$-almost every $x \in \Gamma \backslash G$, $\sigma(x)$ is concentrated on a Borel graph over $N/U$;
    \item There is a Borel subset $\Omega \subset \Gamma \backslash G$ of full BMS measure such that, for any $x \in \Omega$, $\Omega \cap xU = \{x\}$.
    \end{enumerate}
 Also, $\Dim(\BMS,U)>0$ if and only if $\BMS$ is totally recurrent with respect to $U$.
\end{lemma}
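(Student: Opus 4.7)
The natural strategy is to exhibit the cycle $(1)\Rightarrow(3)\Rightarrow(4)\Rightarrow(2)\Rightarrow(1)$, and then to obtain the last sentence of the lemma from $(1)\iff(2)$ combined with the Hopf-type dichotomy that $\BMS$ is either totally dissipative or totally recurrent with respect to $U$. The first three implications amount to bookkeeping on top of lemma~\ref{lemma.biglemma}, whereas $(2)\Rightarrow(1)$ is where the genuine dimension theory enters and will be the main obstacle.

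The implication $(1)\Rightarrow(3)$ is exactly statement~5 of lemma~\ref{lemma.biglemma}. For $(3)\Rightarrow(4)$, I would run the transitivity formula of lemma~\ref{lemma.biglemma}(1) in reverse: if $\sigma_N(x)$ is concentrated on a Borel graph over $N/U$, each fibre conditional measure $\sigma_{N,\pi g}(x) = L_g\sigma_U(xg)$ is a Dirac mass, and combining with the fact (noted just after proposition~\ref{prop.benoistquint}) that $e$ belongs to the support of $\sigma_U(\cdot)$ on a $\BMS$-full measure set, one obtains $\sigma_U(y)=[\delta_e]$ on a conegligible Borel set $\Omega$. The equivariance \eqref{eq.equivariance} then forbids any $g\in U\setminus\{e\}$ from carrying a point of $\Omega$ back to $\Omega$: it would yield $R_g[\delta_e]=[\delta_{g^{-1}}]=[\delta_e]$, forcing $g=e$. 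Hence $\Omega\cap yU=\{y\}$ for all $y\in\Omega$. The implication $(4)\Rightarrow(2)$ is then immediate, since $\Omega$ is itself a wandering set (return-time set reduced to $\{e\}$ for generic $x\in\Omega$) and its $\BMS$-null complement splits trivially into countably many wandering pieces.

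For $(2)\Rightarrow(1)$ I argue by contrapositive. Assume $\Dim(\BMS,U)=\delta>0$. Applying lemma~\ref{lemma.isom} to the $\BMS$-preserving map $\phi_a:x\mapsto xa$ on $\Gamma\backslash G$ (an $\alpha$-homomorphism of $U$-spaces, where $\alpha$ is conjugation by $a^{-1}$, a non-trivial homothety of $U\simeq\R^m$), one obtains, for $a=a_t$, the projective identity $\sigma_U(xa_t)=\alpha_t\sigma_U(x)$ with $\alpha_t$ the homothety by $e^{-t}$. Together with the Poincar\'{e} recurrence of the frame flow (guaranteed by the finiteness of $\BMS$ and the mixing of theorem~\ref{th.dale}), this propagates the local asymptotic $\sigma_U(B(e,\rho))\sim\rho^\delta$ as $\rho\to 0$ out to arbitrarily large scales, so that no positive representative of the projective class $\sigma_U(x)$ can have finite total mass. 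Hence the second bullet of the proposition preceding this lemma applies, and $\BMS$ is totally recurrent with respect to $U$, contradicting~$(2)$. The last sentence of the lemma now follows from $(1)\iff(2)$ together with the same dichotomy. The delicate point in the whole argument, and the step I would take most care over, is this scaling-based justification that positive exact dimension of $\sigma_U(x)$ rules out a finite representative in the projective class: the combination of exact-dimensionality, $A$-scaling quasi-invariance, and Poincar\'{e} recurrence must be wielded with some care to produce the desired infiniteness.
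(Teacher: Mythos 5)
The paper itself gives no proof of this lemma (it is imported verbatim from the author's thesis, corollary 2.2.1.9), so your reconstruction can only be judged on its own terms. The cycle $(1)\Rightarrow(3)\Rightarrow(4)\Rightarrow(2)\Rightarrow(1)$ is the right architecture and the ideas are essentially correct, but two steps are softer than you suggest. In $(4)\Rightarrow(2)$ you dismiss the complement of $\Omega$ as ``trivially'' a countable union of wandering pieces; being null does not make a set wandering, since wandering is a return-time condition, not a measure condition. The repair is easy but must be said: write $U=\bigcup_n K_n$ with $K_n$ compact; each $\Omega K_n$ is wandering because $\Omega\cap xU=\{x\}$ forces the return-time set of $x\in\Omega$ to $\Omega K_n$ into $K_n$, and the leftover $X\setminus\Omega U$ is a $U$-invariant null set, hence genuinely wandering. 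Better still, you can bypass $(4)\Rightarrow(2)$ altogether: your own computation inside $(3)\Rightarrow(4)$ yields $\sigma_U(x)=[\delta_e]$ almost everywhere, a \emph{finite} conditional, so the first bullet of the proposition preceding the lemma gives total dissipativity directly.

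The step you rightly flag as delicate, $(2)\Rightarrow(1)$, is where the sketch is thinnest. ``Propagating the asymptotic $\rho^\delta$ to large scales by Poincar\'{e} recurrence'' implicitly uses continuity of $x\mapsto\sigma_U(x)$ along returning orbits, which is not available ($\sigma_U$ is only Borel); one would have to route through a Lusin set, and you do not indicate this. A cleaner argument avoids recurrence entirely: the scaling identity $\sigma_U(xa_t)=[\mathrm{Int}(a_{-t})\,\sigma_U(x)]$ that you derived shows that the set where $\sigma_U(x)$ has a finite representative is invariant under the frame flow, hence of measure $0$ or $1$ by Theorem \ref{th.dale}; if it has full measure, the mass ratio $G(x)=\sigma_U(x)(B(e,2))/\sigma_U(x)(B(e,1))$ satisfies $G(xa_t)\to 1$ pointwise while $G\circ R_{a_{-t}}$ is equidistributed with $G$, forcing $G=1$ almost everywhere; iterating over all radii shows $\sigma_U(x)$ is a Dirac mass at $e$, hence $\Dim(\BMS,U)=0$. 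This gives the contrapositive of $(2)\Rightarrow(1)$ via the second bullet of the same proposition. Note finally that your appeal to a ``Hopf-type dichotomy'' for the last sentence also rests on this ergodicity observation (one of the two alternatives of that proposition must occur because the finiteness set is invariant); with these repairs the argument is complete.
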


\section{Dimension and dynamics of the Bowen-Margulis-Sullivan along some horospherical subgroup}
\subsection{Dimension and transverse dimension}
\begin{theorem} \label{th.principal1}
Let $\Gamma$ be a Zariski-dense discrete subgroup of $G$ with finite BMS measure. Fix an integer $m$, $1 \leq m \leq n-1$. For any $m$-plane $U$ in $N$, the following dichotomy holds:
\begin{enumerate}
\item If $\delta_\Gamma \leq n-m$, then $\Dim(\BMS,U) = 0$ and $\DimT(\BMS,U)=\delta_\Gamma$.
\item If $\delta_\Gamma > n-m$, then $\Dim(\BMS,U)=\delta_\Gamma-(n-m)$ and $\DimT(\BMS,U)=n-m$.
\end{enumerate}

Besides, the transversal measure is exact dimensional: for $\BMS$-almost every $x \in \Gamma \backslash G$, and any compact neighbourhood $B$ of the identity in $N$, the projection of $\sigma(x)|B$ onto $N/U$ is exact dimensional (of dimension $\DimT(\BMS,N/U)$).

Furthermore, when $\delta_\Gamma > n-m$ the transversal measure is in fact absolutely continuous: for $\BMS$-almost every $x \in \Gamma \backslash G$ and any compact neighbourhood $B$ of the identity in $N$, the projection of $\sigma(x)|B$ onto $N/U$ is absolutely continuous (with respect to the Haar measure).
\end{theorem}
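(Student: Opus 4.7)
The plan is to combine three ingredients: the Ledrappier--Young formula of Lemma \ref{lemma.biglemma}(6), the identification of $\sigma_N(x)$ with the Patterson--Sullivan measure from Lemma \ref{lemma.biglemma}(2), and the ``regular measure'' property of Theorem B. The key reduction is that, after pulling back through the inverse stereographic map, computing $\DimT(\BMS,N/U)$ amounts to computing the dimension of an orthogonal projection of the Patterson--Sullivan measure $\mu$ onto an $(n-m)$-plane in $\R^n$ in a fixed direction (determined by $U$).

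First I would fix a $\BMS$-generic $x=\Gamma g$ and transport the picture to the boundary. By Lemma \ref{lemma.biglemma}(2), the measure $\sigma_N(x)$, restricted to a compact neighbourhood $B$ of $e \in N$, is equivalent to the push-forward of $\mu$ through the inverse stereographic map, with a continuous positive Radon--Nikodym derivative. Hence $\sigma_N(x)|B$ and this push-forward share the same exact dimension, and their orthogonal projections onto the quotient $N/U \simeq \R^{n-m}$ share the same exact dimension as well. Since the stereographic map is smooth and its derivatives at $g^+$ act conformally between the tangent space at infinity and $N = \R^n$, the projection $\pi$ onto $N/U$ corresponds (up to a smooth diffeomorphism on an open set) to an orthogonal projection of $\mu$ onto a fixed $(n-m)$-plane.

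Next I would invoke Theorem B. For $\mu$-almost every base point $\xi$, the push-forward of $\mu$ through the inverse stereographic map based at $\xi$ is a regular measure of exact dimension $\delta_\Gamma$. By definition of regularity, for \emph{every} $(n-m)$-plane the orthogonal projection has exact dimension $\inf\{n-m,\delta_\Gamma\}$, and by Proposition \ref{prop.marstrand} (applied inside the proof of Theorem B via Corollary \ref{corollary.lol}) this projection is almost surely absolutely continuous as soon as $\delta_\Gamma > n-m$. This shows
\[ \DimT(\BMS,N/U) = \inf\{n-m,\delta_\Gamma\}, \]
together with the asserted exact dimensionality and the absolute continuity statement in the case $\delta_\Gamma > n-m$. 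Feeding this into Lemma \ref{lemma.biglemma}(6) gives
\[ \Dim(\BMS,U) = \delta_\Gamma - \inf\{n-m,\delta_\Gamma\}, \]
which is $0$ when $\delta_\Gamma\leq n-m$ and $\delta_\Gamma-(n-m)$ otherwise, completing the dichotomy.

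The main obstacle is the passage to a \emph{fixed} direction, i.e.\ justifying why the projection onto the particular $(n-m)$-plane coming from $U$ attains the Marstrand bound; this is precisely what Theorem B is designed to provide (via Hochman's Theorem \ref{th.hochman} and Corollary \ref{corollary.lol}), and it is where Zariski density (used through the mixing result Theorem \ref{th.dale}) enters, since it is ultimately responsible for the $\mathbf{SO}(n)$-invariance of the fractal distribution generated by $\mu$. A minor technicality to handle is the passage from the almost-everywhere conclusion of Theorem B on the boundary to an almost-everywhere conclusion for $\BMS$ on $\Gamma\backslash G$: this is routine, since the disintegration of $\BMS$ along $N$ is described in terms of $\mu$ at the endpoint $g^+$ (again by Lemma \ref{lemma.biglemma}(2)), and the set of good endpoints pulls back to a $\BMS$-conegligible set on $\Gamma\backslash G$.
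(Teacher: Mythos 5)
Your route through Theorem \ref{th.principal2} is genuinely different from the paper's. The paper does not invoke Theorem B at all (it is proved later, in section \ref{sect.dimproj}, precisely because it needs the heavier Hochman machinery); instead it observes that, by $M$-invariance of $\BMS$ and the conjugation identity $\sigma_U(x)=\mathrm{Int}(g)\left(\sigma_{U'}(xg^{-1})\right)$, the quantities $\Dim(\BMS,U)$ and $\DimT(\BMS,N/U)$, as well as the properties ``the transversal measure is exact dimensional / absolutely continuous'', are independent of the choice of the $m$-plane $U$; combining this with Marstrand's theorem (Proposition \ref{prop.marstrand}) and Fubini over the Grassmannian yields all the claims for \emph{every} $U$ at once, and Ledrappier--Young then finishes exactly as in your last step. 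Your derivation of $\DimT(\BMS,N/U)=\inf\{n-m,\delta_\Gamma\}$ is correct modulo Theorem \ref{th.principal2} (there is no circularity, since the proof of that theorem does not use Theorem \ref{th.principal1}), but it is strictly heavier than necessary: for the dimension dichotomy the rotation trick suffices and Hochman's theorem is not needed.

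The genuine gap is in the last two clauses of the statement. Theorem \ref{th.principal2} and Theorem \ref{th.hochman} control only the lower dimension $\diminf$ of a fixed projection; they give neither exact dimensionality of the projection of $\sigma(x)|B$ onto $N/U$ nor its absolute continuity when $\delta_\Gamma>n-m$. Your appeal to ``Proposition \ref{prop.marstrand} applied inside the proof of Theorem B'' does not work here: Marstrand gives absolute continuity (and exact dimensionality) of $\pi_V\mu$ only for \emph{almost every} $V$, and Hochman's machinery does not upgrade absolute continuity from almost every direction to a fixed direction -- preservation of dimension in all directions does not imply absolute continuity in all directions. To close the gap you need the paper's invariance argument: since $\BMS$ is $M$-invariant and $M$ acts transitively on $m$-planes of $N$, whether the projected conditional measure is exact dimensional (respectively absolutely continuous) for $\BMS$-almost every $x$ is a property independent of $U$; since by Marstrand and Fubini it holds for almost every $U$, it holds for every $U$.
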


\begin{proof}
Fix $m$. The first step is to prove that  $\Dim(\BMS,U)$ does not depend on the choice of the $m$-plane $U$. Indeed choose two $m$-plane $U,U'$ in $N$. Pick some $g \in M$ such that $U'=gUg^{-1}$ (recall that $M$ acting on $N$ by conjugation is just $\mathbf{SO}(n)$ acting on $\R^n$). For $\BMS$-almost every $x \in \Gamma \backslash G$, 
\[ \sigma(xg)=\mathrm{Int}(g) \left(\sigma(x)\right) \]
where $\mathrm{Int}(g)(y)=gyg^{-1}$ for $y \in N$ (this is a basic consequence of $M$-invariance of $\BMS$ and uniqueness of conditional measures).
The measure automorphism $R_g : \Gamma \backslash G \to \Gamma \backslash G$ mapping $x$ to $xg^{-1}$ intertwines the (right) operation of $U$ with the (right) operation of $U'$: for any $u \in U$, if we let $u'=gug^{-1}$,
\[ R_g(x) u' = xg^{-1}(gug^{-1}) = xug^{-1} = R_g(xu) \text. \]
Consequently, using the fact that $R_g (\BMS)=\BMS$, we get
\[ \sigma_U(x) = \mathrm{Int}(g) (\sigma_{U'}(xg^{-1})) \]
and since $\mathrm{Int}(g)$ is an isometry $U \to U'$, this implies readily that
\[ \Dim(\BMS,U)=\Dim(\BMS,U') \text. \]

Likewise, one shows in the same way that $\DimT(\BMS,N/U)$ does not depend on the choice of the $m$-plane $U$. Also, assume that the $m$-plane $U$ satisfies the following property: there is  some relatively compact neighbourhood of the identity $B$ in $N$ such that the push-forward
\[ \frac{\sigma(x)|B}{\sigma(x)(B)} \]
through the quotient mapping $N \to N/U$ is absolutely continuous for $\BMS$-almost every $x \in \Gamma \backslash G$. Then the same property holds for any other $m$-plane $U'$.

Now let $B$ be some fixed relatively compact neighbourhood of the identity in $N$. For any $x \in \Gamma \backslash G$, let
\[ \nu^B(x) = \frac{\sigma(x)|B}{\sigma(x)(B)}\]
this is a finite measure on $N$. For $\BMS$-almost every $x$, this measure is exact dimensional, of dimension $\delta_\Gamma$. Fix such an $x$. For almost every $m$-plane $U$ in $N$, the push-forward of $\nu^B(x)$ through the quotient mapping $N \to N/U$ must be exact dimensional of dimension $\inf \{ n-m,\delta_\Gamma\}$. 

On the other hand, for any $m$-plane $U$ and $\BMS$-almost every $x$, the push-forward of $\nu^B(x)$ through $N \to N/U$ has lower dimension equal to $\DimT(\BMS,N/U)$ almost everywhere.

By virtue of Fubini's Theorem, we deduce that 
\begin{enumerate}
\item For any $m$-plane $U$, 
\[ \DimT(\BMS,U) = \inf\{n-m,\delta_\Gamma\} \]
\item For any $m$-plane $U$ and $\BMS$-almost every $x \in \Gamma \backslash G$, the push-forward of $\nu^B(x)$ through $N \to N/U$ is exact dimensional (of dimension $\DimT(\BMS,U)$.
\item If $\delta_\Gamma > n-m$, then for any $m$-plane $U$ and $\BMS$-almost every $x \in \Gamma \backslash G$, the push-forward of $\nu^B(x)$ through $N \to N/U$ is absolutely continuous.
\end{enumerate} 
Finally, the Ledrappier-Young Theorem immediately implies that for any $m$-plane $U$, 
\[ \Dim(\BMS,U)=\left\{ \begin{array}{lcl} 0 & \mathrm{if} & \delta_\Gamma \leq n-m \\ \delta_\Gamma-(n-m) & \mathrm{otherwise} &
  \end{array}\right. \]
\end{proof}

In proving this Theorem we used Marstrand's Theorem (proposition \ref{prop.marstrand}) and the fact that the BMS measure on $\Gamma \backslash G$ is $M$-invariant (on the right). 
The reader should pay attention to the order of quantifiers in Theorem \ref{th.principal1}. This Theorem really deals with projections of the BMS measure in almost every direction. Our argument revolves around Marstrand's Theorem and uses Winter's Theorem about ergodicity of the BMS measure on $\Gamma \backslash G$ (with respect to frame flow), as well as Ledrappier-Young's Theorem.

We will soon (section \ref{sect.dimproj}) state and prove a result about the projection in some fixed direction -- for this we will need another tool. For now, the following corollary is of interest.
\begin{corollary}
Assume that $\delta_\Gamma \leq n-m$. For almost every $\xi \in \partial \Hr^{n+1}$, the following holds: if we identify the $\partial \Hr^{n+1} \setminus \{\xi\}$ with the $n$-space $N=\R^n$ (through stereographic projection), then, for almost every $m$-plane $U$ in $N$, the Patterson-Sullivan measure is concentrated on a Borel graph along $U$ (\emph{i.e.} there is a Borel section $N/U \to N$ whose range has full Patterson-Sullivan measure).
\end{corollary}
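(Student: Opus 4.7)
The plan is to combine Theorem \ref{th.principal1} and Lemma \ref{lemma.biglemma}, and then transfer the resulting conclusion on $\sigma_N$ to the boundary. Since $\delta_\Gamma \leq n-m$, Theorem \ref{th.principal1} yields $\Dim(\BMS, U) = 0$ for every $m$-plane $U$, so Lemma \ref{lemma.biglemma}(\ref{biglemma.graph}) gives: for $\BMS$-almost every $x$, the measure $\sigma_N(x)$ is concentrated on a Borel graph over $N/U$. Applying Fubini to the product of $\BMS$ with the $O(n)$-invariant probability measure on the Grassmannian lets me swap quantifiers: for $\BMS$-almost every $x = \Gamma g$ and for almost every $m$-plane $U$ in $N$, $\sigma_N(x)$ is concentrated on a Borel graph over $N/U$.

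Next, by Lemma \ref{lemma.biglemma}(2), the push-forward of $\sigma_N(x)$ through the homeomorphism $h \mapsto (gh)^+$ is equivalent to the Patterson-Sullivan measure restricted to $\partial \Hr^{n+1} \setminus \{g^-\}$, with continuous positive Radon-Nikodym derivative; in particular, the ``concentrated on a Borel graph'' property is preserved. The only non-trivial geometric step is to identify the horospheric parametrization $h \mapsto (gh)^+$ with the stereographic projection $\iota_{g^-} : \partial \Hr^{n+1} \setminus \{g^-\} \to \R^n$ up to a similarity of $\R^n$: their composition $N \to \R^n$ is a conformal self-map of $\R^n$ extending continuously to $\infty$, and any such map is a similarity $\phi$, whose linear part $\phi_0$ is an element of $O(n)$ up to a positive scaling. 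Consequently $\phi$ sends affine $U$-cosets in $N$ to affine $V$-cosets in $\R^n$ with $V = \phi_0(U)$, and the push-forward of $\mu$ through $\iota_{g^-}$ is concentrated on a Borel graph over $\R^n/V$.

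Since $\phi_0$ acts on the Grassmannian as an element of $O(n)$, it preserves the Haar measure, so ``almost every $U$'' becomes ``almost every $V$''. Finally, the resulting property depends on $x = \Gamma g$ only through $g^-$, and the map $\Gamma g \mapsto g^-$ pushes $\BMS$ forward to a measure equivalent to $\mu$ (immediately from the Hopf parametrization and formula (\ref{eq.BMS})), so ``for $\BMS$-a.e.\ $x$'' translates to ``for $\mu$-a.e.\ $\xi \in \partial \Hr^{n+1}$''. This yields the corollary. The main obstacle I foresee is the conformal-geometry step identifying horospheric and stereographic coordinates up to a similarity; everything else is a routine application of Fubini and measure push-forwards.
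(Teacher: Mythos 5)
Your argument is correct and follows exactly the route the paper intends: the paper's own proof is just the pointer ``see Lemma \ref{lemma.biglemma}.\ref{biglemma.graph}'', relying on Theorem \ref{th.principal1} to get $\Dim(\BMS,U)=0$, on Lemma \ref{lemma.biglemma}(2) to transfer the graph property of $\sigma_N(x)$ to the Patterson--Sullivan measure, and implicitly on the Fubini swap and the fact that the horospherical chart and the stereographic chart at $g^-$ differ by a similarity. You have merely written out these routine steps (including the Liouville/M\"obius identification and the descent from $\BMS$-a.e.\ $x$ to $\mu$-a.e.\ $\xi$ via the Hopf parametrization), all of which are sound.
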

Thus we may say that if $\delta_\Gamma \leq n-m$, the Patterson-Sullivan is concentrated on a Borel graph along almost every $m$-plane.
\begin{proof}
See lemma \ref{lemma.biglemma}.\ref{biglemma.graph}.
\end{proof}

\subsection{Dynamics of  the Bowen-Margulis-Sullivan  measure}
We still assume that $\Gamma$ is a Zariski-dense discrete subgroup of $G$ with finite BMS measure. The following corollary follows from Theorem \ref{th.principal1} by virtue of lemma \ref{lemma.dissip}.
\begin{corollary}
Fix some $m$-plane $U$ in $N$ and let $U$ act on $\Gamma \backslash G$ (on the right). With respect to the BMS measure  this operation is
\begin{enumerate}
\item totally dissipative if $\delta_\Gamma \leq n-m$
\item totally recurrent if $\delta_\Gamma > n-m$.
\end{enumerate}
\end{corollary}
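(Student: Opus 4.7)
The plan is direct: combine the dimension computation of Theorem \ref{th.principal1} with the dynamical criterion of Lemma \ref{lemma.dissip}. Both sides of the dichotomy are read off immediately once those two results are in hand, and there is essentially no new work to do.

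First I would handle the dissipative case. If $\delta_\Gamma \leq n-m$, Theorem \ref{th.principal1} yields $\Dim(\BMS,U)=0$. By Lemma \ref{lemma.dissip}, this is equivalent to $\BMS$ being totally dissipative with respect to the right action of $U$ on $\Gamma\backslash G$ (in fact this is equivalent to the conditional measure $\sigma_U(x)$ being finite for $\BMS$-almost every $x$, and to the existence of a Borel set of full $\BMS$-measure meeting every $U$-orbit in one point).

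Next, for the recurrent case, if $\delta_\Gamma > n-m$, Theorem \ref{th.principal1} gives $\Dim(\BMS,U)=\delta_\Gamma-(n-m)>0$. By the second half of Lemma \ref{lemma.dissip}, this strict positivity of dimension is equivalent to $\BMS$ being totally recurrent with respect to $U$.

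There is no main obstacle: the heavy lifting (use of Marstrand's theorem, Winter's mixing result, and the Ledrappier--Young formula) has been carried out in Theorem \ref{th.principal1}, and the equivalence between vanishing of $\Dim(\BMS,U)$ and total dissipativity is exactly the content of Lemma \ref{lemma.dissip}. The only mild point worth flagging, should one wish to include the boundary case $m=n$ (so $U=N$), is that the first alternative $\delta_\Gamma\leq n-m=0$ is then vacuous for a non-elementary $\Gamma$, so only the recurrence statement applies; but for $1\leq m\leq n-1$, which is the range covered by Theorem \ref{th.principal1}, no such remark is needed.
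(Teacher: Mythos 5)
Your proof is correct and is exactly the paper's argument: the corollary is obtained by combining the dimension dichotomy of Theorem \ref{th.principal1} with the equivalences of Lemma \ref{lemma.dissip}, with no additional work. Your side remark about the boundary case $m=n$ is a reasonable observation but not needed for the statement as it is used in the paper.
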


\section{Dimension of projections} \label{sect.dimproj}
\subsection{Statement}
\begin{definition}
Let $\mu$ be some Radon measure on $\R^n$. We say that $\mu$ is regular if
\begin{enumerate}
\item it is exact dimensional of dimension $\delta \in [0,n]$ 
\item  for any $k$-plane $V$ in $\R^n$ ($1 \leq k \leq n-1$), and any relatively compact open subset $B \subset \R^n$ such that $\mu(B)>0$, the orthogonal projection of $\mu|B$ onto $V$ has dimension
\[ \inf\{ \delta,k\} \text. \]
\end{enumerate}
\end{definition}

\begin{theorem} \label{th.principal2}
Let $\Gamma$ be a discrete Zariski-dense subgroup of $G$ with finite BMS measure. Let $\sigma : \Gamma \backslash G \to \Mrad(N)$ be the mapping obtained by disintegrating $\BMS$ along $N$. For $\BMS$-almost every $x \in \Gamma \backslash G$, $\sigma(x)$ is a regular measure.
\end{theorem}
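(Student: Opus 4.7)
The plan is to apply Corollary \ref{corollary.lol} to $\sigma(x)$. This requires showing two things: (i) that $\sigma(x)$ is a uniformly scaling measure for $\BMS$-almost every $x$, and (ii) that the restricted ergodic fractal distribution $P^\square$ it generates is invariant under the natural action of $\mathbf{SO}(n)$. The exact-dimensionality of $\sigma(x)$, of dimension $\delta_\Gamma$, is already given by Lemma \ref{lemma.biglemma}, since $\sigma(x)$ pushes forward to a measure equivalent to the Patterson--Sullivan measure.

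The crucial dynamical observation for (i) is that the conjugation action of $a_t \in A$ on $N$, under the identification $N \simeq \R^n$, is precisely the homothety $S_t$ (up to a sign of $t$). Combining the $N$-equivariance of Proposition \ref{prop.benoistquint} with Lemma \ref{lemma.isom} applied to the automorphism $\mathrm{Int}(a_t)$ of $N$, for $\BMS$-almost every $x$, $\sigma(x)$-almost every $g \in N$, and all $t \in \R$, one gets
\[ \sigma(xga_t)^\square \;=\; \sigma(x)_{g,t}^\square \text. \]
By Theorem \ref{th.dale} the right $A$-action on $(\Gamma \backslash G, \BMS)$ is mixing, hence ergodic. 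Applying Birkhoff's ergodic theorem to a countable dense family of bounded continuous functionals on $\mathcal M^\square$, for $\BMS$-almost every $y$,
\[ \frac{1}{T} \int_0^T \delta_{\sigma(ya_t)^\square}\,\mathrm{d}t \;\xrightarrow[T \to \infty]{}\; P^\square \]
weakly, where $P^\square$ is the push-forward of $\BMS$ through $y \mapsto \sigma(y)^\square$. Disintegrating $\BMS$ along $N$ and setting $y = xg$ yields $\langle \sigma(x) \rangle_{g, T} \to P^\square$ for $\BMS$-a.e. $x$ and $\sigma(x)$-a.e. $g$, which is exactly (i).

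For (ii), the group $M$ acts on $\Gamma \backslash G$ on the right preserving $\BMS$ (by definition of the $M$-invariant lifting), and its conjugation action on $N$ is the standard action of $\mathbf{SO}(n)$ on $\R^n$. Lemma \ref{lemma.isom} then gives $\sigma(xm)^\square = \mathrm{Int}(m^{-1})_* \sigma(x)^\square$, so $P^\square$ (the push-forward of $\BMS$) is invariant under $\mathrm{Int}(M) = \mathbf{SO}(n)$. Corollary \ref{corollary.lol} therefore yields $\diminf(\pi_V \sigma(x)) = \inf\{k, \delta_\Gamma\}$ for every fixed $k$-plane $V$; combined with the upper bound coming from the 1-Lipschitzness of $\pi_V$ and the exact-dimensionality of $\sigma(x)$, the projection $\pi_V \sigma(x)$ is exact-dimensional of this value.

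For the regularity statement as formulated, the conclusion must hold for $\sigma(x)|B$ with $B$ a relatively compact open set of positive mass. But uniform scaling is purely local at points of the support: for $\sigma(x)$-almost every $g \in B$ and all sufficiently large $t$, the $e^{-t}$-ball around $g$ sits inside $B$, so $(\sigma(x)|B)_{g,t}^\square = \sigma(x)_{g,t}^\square$. Hence $\sigma(x)|B$ (normalized) is uniformly scaling and generates the same $P^\square$, and Hochman's theorem applies verbatim. The main obstacle is really step (i): carefully identifying the scaling flow $S_t^\square$ on conditional measures with the right action of $A$ on $(\Gamma \backslash G, \BMS)$. This is a bookkeeping task requiring the transitivity Proposition \ref{prop.transitivity}, the equivariance of Proposition \ref{prop.benoistquint}, and the isomorphism Lemma \ref{lemma.isom} to combine seamlessly; once that identification is in place, mixing and $M$-invariance of $\BMS$ deliver the result.
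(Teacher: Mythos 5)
Your proposal follows essentially the same route as the paper: push $\BMS$ forward through $x \mapsto \sigma^\square(x)$ to obtain the distribution $P^\square$, establish uniform scaling via Birkhoff's theorem for the (mixing, hence ergodic) frame flow combined with the $N$-equivariance of $\sigma$, deduce $\mathbf{SO}(n)$-invariance of $P^\square$ from the $M$-invariance of $\BMS$, and conclude via Corollary \ref{corollary.lol}. The only cosmetic difference is that you invoke Hochman's theorem that uniformly scaling measures generate restricted ergodic fractal distributions, whereas the paper verifies directly that $P$ is an ergodic fractal distribution (in particular, quasi-Palm); both are legitimate given the results quoted in Section 2.
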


\subsection{Proof}
Let $\sigma : \Gamma \backslash G \to \Mrad(N)$ be the mapping obtained by disintegrating $\BMS$ along $N$. We are going to apply Theorem \ref{th.hochman}. Identify $N$ with $\R^n$ and denote by $B_1$ the unit cube $[-1,1]^n$. The space $\mathcal M$ of non-zero Radon measures on $\R^n$ identifies with the space of non-zero Radon measures on $N$.

For any $x \in \Gamma \backslash G$ let
\[ \sigma^*(x) = \frac{\sigma(x)}{\sigma(x)(B_1)} \]
that is, $\sigma^*$ is the composition of $\sigma$ and the mapping $[\mu] \mapsto \mu^*$. Likewise, we let
\[ \sigma^\square(x)=\frac{\sigma(x)|B_1}{\sigma(x)(B_1)} \text. \]
Obviously $\sigma^*(x) \in \mathcal M^*$ and $\sigma^\square(x) \in \mathcal M^\square$.

We define a distribution $P$ on $\mathcal M^*$:
\[ P = \int \mathrm{d} \BMS(x)\ \delta_{\sigma^*(x)}  \]
where $\delta_{\sigma^*(x)}$ is the Dirac mass at $\sigma^*(x)$.
The proof of our Theorem consists of the following checks:
\begin{enumerate}
\item The distribution $P$ is an ergodic fractal distribution.
\item For $\BMS$-almost every $x \in \Gamma \backslash G$ $\sigma^\square(x)$ is a uniformly scaling measure generating the restricted version of $P$ (that is, $P^\square$).
\item For any $k$, $1 \leq k \leq n-1$, and any linear mapping $\pi$ from $\R^n$ onto $\R^k$, 
\[ E_{P^\square}(\pi) = \inf\{ \delta_\Gamma,k\} \text. \]
\end{enumerate}

These three points are enough to prove the Theorem. Indeed, by virtue of Theorem \ref{th.hochman}, for $\BMS$-almost every $x \in \Gamma \backslash G$, and \emph{every} $k$-plane $V \subset N$, we obtain
\[ \diminf\left( \pi_V(\sigma^\square(x)) \right) \geq \inf\{ \delta_\Gamma,k\} \]
where $\pi_V$ is the orthogonal projection $\R^n \to V$. The converse inequality is obviously true, so that we get 
\[ \diminf\left( \pi_V(\sigma^\square(x)) \right) = \inf\{ \delta_\Gamma,k\} \]
for \emph{every} $k$-plane $V$.

Before proceeding to prove the above three points, let us clarify notations. Remember that we fixed an isomorphism $\R \to A$, $t \mapsto a_t$ such that the frame flow on $\Gamma \backslash G$ identifies with the operation $(t,\Gamma g) \mapsto \Gamma g a_t$. The  automorphism $\mathrm{Int}(a_t)$ of $N$ that maps $g$ to $a_t g a_{-t}$ identifies with the homothety
\[ S_t : \R^n \to \R^n,\quad y \mapsto e^t y \text. \]

For any fixed $t \in \R$ and $\BMS$-almost every $x \in \Gamma \backslash G$, $\sigma^*(xa_{-t}) = S_t \sigma^*(x)$ by lemma \ref{lemma.isom}.

Also, for $\BMS$-almost every $x \in \Gamma \backslash G$ and $\sigma(x)$-almost every $g \in N$, the push-forward of $\sigma(x)$ through the mapping $L(g^{-1}) : h \mapsto g^{-1}h$, is equal to $\sigma(xg)$.

\begin{lemma}
The distribution $P$ is an ergodic fractal distribution.
\end{lemma}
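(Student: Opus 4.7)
The plan is to verify three properties of $P$: invariance under the scaling flow $(S_t^*)$, ergodicity under this flow, and the quasi-Palm property. The first two are straightforward.

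For invariance, I would apply lemma \ref{lemma.isom} to the Borel automorphism $R_{a_t}$ of $\Gamma\backslash G$, which preserves $\BMS$ by $A$-invariance and is an $\mathrm{Int}(a_t)$-homomorphism of $N$-spaces, with $\mathrm{Int}(a_t)$ identified with the homothety $S_t$ on $N \cong \R^n$. This gives $\sigma^*(ya_{-t}) = S_t^* \sigma^*(y)$ for $\BMS$-a.e.\ $y$, and a change of variable in the integral defining $P$ then yields $S_t^* P = P$. For ergodicity, the Borel map $\Psi : y \mapsto \sigma^*(y)$ intertwines the frame flow with $(S_{-t}^*)$, so $(\mathcal M^*, P, S_t^*)$ is a measurable factor of $(\Gamma\backslash G, \BMS, R_{a_t})$; mixing (and hence ergodicity) of the frame flow, provided by theorem \ref{th.dale}, therefore descends to $P$.

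The core of the proof is the quasi-Palm property. Fixing a relatively compact neighbourhood $U$ of $0$ in $N$ and setting $Q = \int dP(\mu) \int_U \delta_{T_x^* \mu} \, d\mu(x)$, I would use equivariance of the conditional measures (proposition \ref{prop.benoistquint}), which in additive coordinates reads $T_x^* \sigma^*(y) = \sigma^*(yx)$ for $\BMS$-a.e.\ $y$ and $\sigma^*(y)$-a.e.\ $x$, to rewrite $Q = \Psi_* \tilde Q$, where
\[ \tilde Q(A) = \int d\BMS(y) \, \sigma^*(y)(\{x \in U : yx \in A\}) \]
is an auxiliary measure on $\Gamma\backslash G$. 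Since $\Psi_* \BMS = P$, it suffices to show $\tilde Q \sim \BMS$.

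To compare $\tilde Q$ and $\BMS$, I would pick a complete lacunary section $\Sigma$ for $N$ and unfold $\BMS$ as $d\lambda_\Sigma(s) \, d\sigma_\Sigma(s)(g)$ along the parametrisation $y = gs$; then using the equivariance $\sigma(gs) = L(g^{-1})_* \sigma(s)$ to express $\sigma^*(gs)$ in terms of $\sigma(s)$, followed by the change of variable $h = gx$, the inner integral collapses into an integral against $d\sigma(s)(h)$ restricted to $h \in gU$. Regrouping then identifies $\tilde Q$ as $\rho \cdot \BMS$ for a Borel density $\rho$ that is positive and finite $\BMS$-almost everywhere; positivity follows from the fact that $0$ a.s.\ belongs to the support of $\sigma^*(z)$, and finiteness from the Radon character of $\sigma^*(z)$ combined with the relative compactness of $U$. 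A symmetric argument using that $-U$ is also a relatively compact neighbourhood of $0$ gives $\BMS \ll \tilde Q$, so $\tilde Q \sim \BMS$ and hence $Q \sim P$. The main obstacle is precisely this quasi-Palm verification, since $\BMS$ is not $N$-invariant (nor even $N$-quasi-invariant in the classical sense), preventing a direct Fubini-type unfolding; everything has to go through the disintegration machinery of proposition \ref{prop.benoistquint} and a careful tracking of the normalisations $\sigma \mapsto \sigma^*$ through the section.
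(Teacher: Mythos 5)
Your proof is correct, and its first two steps coincide with the paper's: invariance of $P$ under $(S_t^*)$ via Lemma \ref{lemma.isom} applied to $R_{a_t}$, and ergodicity by exhibiting $(\mathcal{M}^*,P,(S_t^*)_t)$ as a factor of the frame flow through $y \mapsto \sigma^*(y)$. For the quasi-Palm property the overall reduction is also the same --- both you and the paper use the equivariance $T_x^*\sigma^*(y)=\sigma^*(yx)$ to pull the smeared distribution $\int \mathrm{d}P(\mu)\int_U \delta_{T_x^*\mu}\,\mathrm{d}\mu(x)$ back to a measure $\tilde Q$ on $\Gamma\backslash G$ and then compare $\tilde Q$ with $\BMS$ --- but the comparison itself is carried out differently. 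The paper argues purely at the level of null sets, invoking the fact stated after Proposition \ref{prop.benoistquint} that $A$ is $\BMS$-negligible iff for almost every $y$ the set of $g$ with $yg\in A$ is $\sigma(y)$-negligible; this gives mutual absolute continuity in a few lines, the only point left implicit being the propagation from the fixed neighbourhood $U$ to all of $N$. You instead unfold $\BMS$ through a complete lacunary section and produce an explicit Radon--Nikodym density $\rho$ of $\tilde Q$ with respect to $\BMS$, positive almost everywhere because $0$ lies in the support of $\sigma(y)$ almost surely. This is heavier: one must weight the countable fibers of $a_\Sigma$ when unfolding, and the finiteness of $\rho$ requires a small covering argument to bound $\int \mathrm{d}\sigma(g)/\sigma(gB_1)$ over a compact set --- ``Radon character plus relative compactness'' alone does not quite deliver it --- though finiteness is in fact not needed for equivalence once $\rho>0$ almost everywhere. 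In exchange your route yields a quantitative statement and makes the direction $\BMS\ll\tilde Q$ automatic, so the appeal to a symmetric argument with $-U$ is superfluous.
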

\begin{proof}
The mapping
\[ \begin{array}{ccc}\Gamma \backslash G & \to & \mathcal{M}^* \\ x & \mapsto & \sigma^*(x)
\end{array} \]
 maps $\BMS$ onto $P$ and   intertwines ($\BMS$-almost surely) the operation of $(a_t)_t$ on $\Gamma \backslash G$ with the operation of $(S_t^*)_t$ on $\mathcal M^*$. In other words, for every $t \in \R$ and $\BMS$-almost every $x \in \Gamma \backslash G$, there holds
\[ S_t^* \sigma^*(x) = \sigma^*(x a_t)\text. \]
The dynamical system $(\mathcal{M}^*, P, (S_t^*)_t)$ is thus a factor of the dynamical system $(\Gamma \backslash G, \BMS, (a_t)_t)$. Since the latter is ergodic, the former must be ergodic as well.

Let us now check quasi-Palmness. Let $\mathcal E$ be some Borel subset of $\mathcal M^*$ and let $E$ be the set of all $x \in \Gamma \backslash G$ such that $\sigma^*(x) \in \mathcal E^*$. Let $W$ be some relatively compact neighbourhood of the identity in $N$. Keeping in mind the relation \eqref{eq.equivariance} in proposition \ref{prop.benoistquint}, one readily checks that  $\mathcal E$ is negligible with respect to the distribution on $\mathcal M^*$ 
\[ \int \mathrm{d} P(\mu) \int_W \mathrm{d} \mu(v) \delta_{T_v^* \mu} \]
if and only if the set of all $x \in \Gamma \backslash G$ such that
\[ \sigma^*(x)\{ y \in W; xy \in E \} > 0 \]
is negligible with respect to $\BMS$. This is equivalent to the fact that $E$ is negligible with respect to $\BMS$, hence also to the fact that $\mathcal E$ is negligible with respect to $P$.
\end{proof}

\begin{lemma} For $\BMS$-almost every $x \in \Gamma \backslash G$, the measure $\sigma^*(x)$ is uniformly scaling and generates $P^\square$.
\end{lemma}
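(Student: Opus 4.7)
The plan is to relate the scenery flow dynamics on $\sigma^*(x)$ to the frame flow dynamics on $\Gamma \backslash G$, and then deduce the desired weak-$*$ convergence from the Birkhoff ergodic theorem applied to the ergodic (indeed mixing, by theorem \ref{th.dale}) frame flow $(a_t)$.

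The central computation is the identity
\[ (\sigma^*(x))^\square_{y,t} = \sigma^\square(xya_{-t}), \]
valid for $\BMS$-almost every $x \in \Gamma \backslash G$, $\sigma(x)$-almost every $y \in N$, and every $t \in \R$. Indeed, since $N$ is abelian, the Euclidean translation $T_y : h \mapsto h-y$ coincides with the group left translation $L_{y^{-1}}$. Combined with the equivariance recalled just before this lemma---namely $L_{y^{-1}}\sigma(x) = \sigma(xy)$ projectively---this gives $[T_y \sigma^*(x)] = [\sigma^*(xy)]$. Composing with $S_t$ and using the identity $\sigma^*(z a_{-t}) = S_t \sigma^*(z)$ (also recalled just before this lemma) yields $[S_t T_y \sigma^*(x)] = [\sigma^*(xya_{-t})]$. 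The origin $0$ belongs to the support of both sides (because $y \in \mathrm{supp}(\sigma(x))$ and the supports are closed under $T_y$ and $S_t$), so the $\square$ operation is well-defined; since it depends only on the projective class, taking $\square$ produces the identity.

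To conclude, fix a countable family $(f_k)$ dense in $C(\mathcal M^\square)$. By the Birkhoff ergodic theorem applied to each bounded Borel observable $z \mapsto f_k(\sigma^\square(z))$ under the flow $(a_{-t})$, we obtain a $\BMS$-conull set $\Omega_k \subset \Gamma \backslash G$ on which the time average converges to $\int f_k\, dP^\square$. Setting $\Omega = \bigcap_k \Omega_k$, for any $z \in \Omega$ we have the weak-$*$ convergence
\[ \frac{1}{T} \int_0^T \delta_{\sigma^\square(z a_{-t})}\, dt \xrightarrow[T \to \infty]{} P^\square. \]
Since $\Omega^c$ is $\BMS$-negligible, the general disintegration principle (the consequence of proposition \ref{prop.benoistquint} noted in the text) yields that for $\BMS$-almost every $x$ the set $\{y \in N : xy \in \Omega\}$ has full $\sigma(x)$-measure; for such $y$, combining with the central identity of the previous paragraph gives $\langle \sigma^*(x) \rangle_{y,T} \to P^\square$, as required. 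The main (minor) technical step is the measure-valued Birkhoff argument that handles weak-$*$ convergence via a countable dense family; the substantive content is the deterministic identity that translates translation-and-scaling of $\sigma^*(x)$ into the frame flow on $\Gamma \backslash G$.
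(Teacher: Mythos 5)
Your proof is correct and follows essentially the same route as the paper's: translate the scenery flow on $\sigma^*(x)$ into the frame flow via the equivariance identities, apply Birkhoff at the base point, and transfer to $\sigma(x)$-almost every translate using the disintegration principle. You are somewhat more explicit than the paper about the countable dense family of test functions and about the identity $(\sigma^*(x))^\square_{y,t}=\sigma^\square(xya_{-t})$, but the substance is identical.
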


\begin{proof}
Let $h$ be some continuous mapping on the compact second countable space $\mathcal M^\square$. We must prove that for $\BMS$-almost every $x \in \Gamma \backslash G$, for $\sigma^\square(x)$-almost every $g \in N$, 
\[ (*) \qquad \lim_{T \to \infty} \frac{1}{T} \int_0^T h\left( S_t^\square (L_{g^{-1}} \sigma(x)) \right) \mathrm{d} t = \int h(\sigma^\square(x)) \mathrm{d} \BMS (x) \mathrm{.} \]
Let us first remark that this equality holds in the particular case when $g$ is the identity element of $N$. Indeed, since $h$ is bounded, we need just check that
\[ \lim_{k \to \infty} \frac{1}{k} \sum_{i=0}^{k-1} \int_0^1 h(S_t^\square \sigma^* (x a_{-i})) \mathrm{d} t \]
for $\BMS$-almost every $x$. This is a consequence of pointwise ergodic Theorem of Birhov applied to the function
\[ x \mapsto \int_0^1 h(S_t^\square \sigma^*(x)) \mathrm{d} t \]
(recall that $\BMS$ is ergodic with respect to the automorphism $x \mapsto xa_{-1}$, see Theorem \ref{th.dale}).
Now let us denote by $X$ the set of all points $x \in \Gamma \backslash G$ such that $(*)$ is satisfied when $g$ is the identity element of $N$. For $\BMS$-almost every $x \in \Gamma \backslash G$ and $\sigma(x)$-almost every $g \in N$, there holds $L(g^{-1}) \sigma(x)=\sigma(xg)$; furthermore, $xg$ belongs to $X$ for $\sigma(x)$-almost every $g$, since $X$ has full measure with respect to $\BMS$. 

This shows that $(*)$ holds for $\BMS$-almost every $x \in \Gamma \backslash G$ and $\sigma^\square(x)$-almost every $g \in N$.

Hence the lemma.
\end{proof}

\begin{lemma}
Let $V$ be some $k$-plane in $N$, $1 \leq k \leq n-1$, and $\pi$ be the orthogonal projection from $N$ onto $V$. Then
\[ E_{P^\square}(\pi) = \inf \{ \delta_\Gamma,k\} \text. \]
\end{lemma}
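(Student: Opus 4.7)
The plan is a direct application of Theorem \ref{th.principal1}, which has been set up to deliver exactly the ingredient required here. Let $V$ be a $k$-plane in $N \simeq \R^n$ and $\pi : \R^n \to V$ the orthogonal projection. Its kernel $U = V^\perp$ is an $(n-k)$-plane in $N$, and the canonical quotient $N \to N/U$ is isometrically identified with $\pi$ via the Euclidean isomorphism $N/U \simeq V = U^\perp$. In particular, the pushforward of any measure on $N$ under $\pi$ coincides (up to this identification) with its pushforward under $N \to N/U$.

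Setting $m = n - k$, Theorem \ref{th.principal1} asserts that for $\BMS$-almost every $x \in \Gamma \backslash G$ and any relatively compact neighbourhood $B$ of the identity in $N$, the pushforward of $\sigma(x)|B$ under $N \to N/U$ is exact dimensional of dimension
\[
\DimT(\BMS, N/U) \;=\; \inf\{\delta_\Gamma,\, n-m\} \;=\; \inf\{\delta_\Gamma,\, k\},
\]
and is even absolutely continuous when $\delta_\Gamma > k$. Taking $B = B_1$, the passage from $\sigma(x)|B_1$ to $\sigma^\square(x) = \sigma(x)|B_1 / \sigma(x)(B_1)$ is just multiplication by a positive constant, an operation that preserves exact dimensionality and its numerical value. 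Hence for $\BMS$-almost every $x$, the measure $\pi\,\sigma^\square(x)$ is exact dimensional of dimension $\inf\{\delta_\Gamma, k\}$, and in particular $\diminf(\pi\,\sigma^\square(x)) = \inf\{\delta_\Gamma, k\}$.

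Since $P^\square$ is by construction the pushforward of $\BMS$ under $x \mapsto \sigma^\square(x)$, integrating this constant quantity yields
\[
E_{P^\square}(\pi) \;=\; \int \diminf(\pi\mu)\, \mathrm{d} P^\square(\mu) \;=\; \inf\{\delta_\Gamma,\, k\},
\]
which is the desired equality. There is no genuine obstacle: the heavy lifting (transversal exact dimensionality, use of Marstrand and Ledrappier--Young) was already done in Theorem \ref{th.principal1}. The only point deserving a line of care is the identification of the linear projection $\pi$ with the quotient map $N \to N/U$, which is transparent because $N/U$ inherits the Euclidean structure of $U^\perp = V$.
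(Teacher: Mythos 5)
Your proof is correct, but it reaches the conclusion by a different citation path than the paper. The paper's own proof is a two-line appeal to the $M$-invariance of the distribution $P$ (inherited from the $M$-invariance of $\BMS$) together with Corollary \ref{corollary.lol}: since $P^\square$ is $\mathbf{SO}(n)$-invariant, $E_{P^\square}(\pi)$ does not depend on the choice of the $k$-plane $V$, so it equals its average over the Grassmannian, which Marstrand's theorem (Proposition \ref{prop.marstrand}) and Fubini evaluate as $\int \inf\{k,\diminf \mu\}\,\mathrm{d}P^\square(\mu) = \inf\{k,\delta_\Gamma\}$, using that $P^\square$-almost every $\mu$ has exact dimension $\delta_\Gamma$. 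You instead extract the value of $\diminf(\pi\,\sigma^\square(x))$ for the \emph{fixed} plane $V$ directly from Theorem \ref{th.principal1} applied to $U=V^\perp$ (an $(n-k)$-plane, so $n-m=k$), and then integrate an almost everywhere constant function. This is legitimate: the lemma only concerns a fixed $\pi$, so the $U$-dependent null set in Theorem \ref{th.principal1} causes no harm here, and your argument does not short-circuit Theorem \ref{th.principal2} --- the quantifier swap to ``almost every $x$, all $V$'' is exactly what Hochman's theorem is then needed for. The two proofs rest on the same engine ($M$-invariance plus Marstrand plus Fubini), but you reuse the place where that work was already carried out (on $\Gamma\backslash G$ and its conditional measures), whereas the paper reruns it at the level of the distribution $P$; your version also records the slightly stronger fact that $\diminf(\pi\mu)$ is $P^\square$-almost surely equal to $\inf\{\delta_\Gamma,k\}$, not merely that its mean is. Your two points of care --- identifying the quotient map $N\to N/U$ with the orthogonal projection onto $V=U^\perp$, and noting that normalizing $\sigma(x)|B_1$ does not affect dimension --- are both handled correctly.
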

\begin{proof}
By the very definition of $P$, it stands to reason that this distribution is $M$-invariant (since $\BMS$ is $M$-invariant). Now apply corollary \ref{corollary.lol}.
\end{proof}

\bibliography{bibli}

\end{document}